\numberwithin{equation}{section}
\newtheorem{thm}{Theorem}[section]
\newtheorem{lem}{Lemma}[section]
\newtheorem{remark}{Remark}[section]
\newtheorem{pro}{Proposition}[section]
\newtheorem{theoremalph}{Problem}
\begin{document}
\title[continuity of the $L_p$ Guassian Minkowski problem] {Continuity of the solution to the $L_p$ Minkowski problem in Gaussian probability space* }

\author{Hejun Wang**}

\address{\parbox[l]{1\textwidth}{School of Mathematics and
Statistics, Shandong Normal University, Ji'nan, Shandong 250014, China}}
\email{wanghjmath@sdnu.edu.cn}

\subjclass[2000]{52A40} \keywords{convex body; continuity; $L_p$ Gaussian surface area measure; $L_p$ Gaussian Minkowski problem. }

\thanks{*Supported in part by China Postdoctoral Science Foundation (No.2020M682222)
and Natural Science Foundation of Shandong (No.ZR2020QA003, No.ZR2020QA004)}
\thanks{**The corresponding author}

\maketitle

\begin{abstract}
In this paper, it is proved that the weak convergence of the $L_p$ Guassian
surface area measures implies the convergence of the corresponding convex bodies in the
Hausdorff metric for $p\geq 1$. Moreover, this paper obtains the solution to the $L_p$ Guassian Minkowski
problem is continuous with respect to $p$.
\end{abstract}

\vskip 0.5cm
\section{Introduction}
\vskip 0.3cm

A compact convex set with non-empty interior is called a convex body in $n$-dimensional Euclidean space $\mathbb{R}^n$.
Let $\mathcal{K}^n$ denote the set of all convex bodies in $\mathbb{R}^n$, and let $\mathcal{K}_o^n$ denote the set of all convex bodies in $\mathbb{R}^n$ containing the origin in their interiors. It is clear that $\mathcal{K}_o^n$ is subset of $\mathcal{K}^n$.

The Brunn-Minkowski theory is the very core of convex geometric analysis, and Minkowski problem is one of main parts of the Brunn-Minkowski theory. Minkowski problem characterizes a geometric measure generated by convex bodies.
For smooth case, it corresponds to Monge-Amp\`{e}re type equation in partial differential equations.
The study of Minkowski problem promotes greatly
developments of the Brunn-Minkowski theory \cite{Schneider}) and fully non-linear partial differential equations (see \cite{TrudingerWang}).

In 1990s, Lutwak \cite{Lutwak3} introduced the $L_p$ surface area measure $S_{p}(K, \cdot)$ of convex body $K\in \mathcal{K}_o^n$ which is a fundamental concepts in convex geometric analysis defined  by the variational formula of the $n$-dimensional volume (Lebesgue measure) $V_n$ as follows:

For $p\in \mathbb{R}\setminus\{0\}$,
\begin{align}\label{Lp-measure}
\lim_{t\rightarrow0^+}\frac{V_n(K+_pt\cdot L)-V_n(K)}{t}
=\frac{1}{p}\int_{S^{n-1}}h_L^p(u)dS_{p}(K,u),
\end{align}
where $K+_pt\cdot L$ is the $L_p$ Minkowski combination of $K,L\in \mathcal{K}_o^n$ (see \eqref{Lp-Min-comb}), and $h_L$ is the support function of $L$ on the unit sphere $S^{n-1}$ (see \eqref{support-function}). Note that the case for $p=0$ can be defined by similar way.
When $p=1$, the $L_1$ surface area measure $S_1(K,\cdot)$ is the well-known classical surface area measure $S_K$, that is, $S_1(K,\cdot)=S_K$.

The Minkowski problem for the $L_p$ surface area measure
is called $L_p$ Minkowski problem as follows:
\vskip 0.2cm
\noindent
{\bf $L_p$ Minkowski problem:}
\emph{For a fixed $p$ and a given non-zero finite Borel measure $\mu$ on $S^{n-1}$, what are the necessary and sufficient conditions on $\mu$ such that
there exists a convex body $K$ in $\mathbb{R}^n$ such that its
$L_p$ surface area measure $S_p(K,\cdot)$ is equal to $\mu$, that is,
\begin{align*}
\mu=S_p(K,\cdot)?
\end{align*}}

When $p=1$, the $L_p$ Minkowski problem is the classical Minkowski problem studied by Minkowski \cite{Minkowski1897,Minkowski1903}, Alexandrov \cite{Aleksandrov1938,Aleksandrov1939}, Fenchel-Jensen \cite{FenchelJ} and others. Besides, the centro-affine Minkowski problem ($p=-n$) and the logarithmic
Minkowski problem ($p=0$) are other two special cases of the $L_p$ Minkowski problem, see \cite{ChouWang,BoroczkyLYZ2,BoroczkyHZ,Zhu1,Zhu3,JianLuZhu,Stancu1,Stancu2}.
For the existence, uniqueness and regularity of the (normalized) $L_p$ Minkowski problem, one can see \cite{Lutwak3,LutwakOliker,LYZ3,
HuangLiuXu,LuWang,Zhu2,Zhu5,HugLYZ,Chen}.
As an important application, the solutions to the $L_p$ Minkowski problem play a vital role in discovering some new (sharp) affine $L_p$ Sobolev inequalities,
see \cite{Zhang1,LYZ2,CianchiLYZ,HaberlSchuster1,HaberlSchuster2,HaberlSchusterXiao,Wang1}.
Besides, the $L_p$ Minkowski problem was studied by some curvature flows.

The $L_p$ surface area measure has an important property as follows:
\emph{If the sequence $\{K_i\}\subseteq \mathcal{K}_o^n$ converges  to $K_0\in \mathcal{K}_o^n$ in the Hausdorff metric, then $\{S_{p}(K_i,\cdot)\}$ converges to $S_{p}(K_0,\cdot)$ weakly.}
This is the continuity of the $L_p$ surface area measure with respect to the Hausdorff metric. The reverse question of this result is interesting:
\begin{theoremalph}\label{A}
Does the sequence $\{K_i\}\subseteq \mathcal{K}_o^n$ converge  to $K_0\in \mathcal{K}_o^n$ in the Hausdorff metric as $\{S_{p}(K_i,\cdot)\}$ converges to $S_{p}(K_0,\cdot)$ weakly?
\end{theoremalph}
This problem is related closely to the $L_p$ Minkowski problem and can be restated as
follows:
\begin{theoremalph}\label{B}
Suppose that $p\in \mathbb{R}$, $K_i\in \mathcal{K}_o^n$ is the solution to the $L_p$ Minkowski problem associated with Borel measure $\mu_i$ on $S^{n-1}$ and $K_0\in \mathcal{K}_o^n$ is the solution to the $L_p$ Minkowski problem associated with Borel measure $\mu_0$ on $S^{n-1}$.
Does the sequence $\{K_i\}$ converge  to $K_0$ in the Hausdorff metric as $\{\mu_i\}$ converges to $\mu_0$ weakly?
\end{theoremalph}

In this sense,
Problem \ref{A} (or Problem \ref{B}) is called continuity of the solution to the $L_p$ Minkowski problem.
Since the $L_p$ surface area measure is positively homogeneous of degree $n-p$, then Zhu \cite{Zhu4} showed that this question is not positive for $p=n$ by the following counterexample:
\emph{Let $p=n$, $K_1$ be an origin-symmetric convex body and $K_i=\frac{1}{i}K_1$, then
$S_n(K_i,\cdot)=S_n(K_1,\cdot)$ for
all $i$ but $\{K_i\}$ converges to the origin as $i\rightarrow +\infty$.}
 Moreover, Zhu \cite{Zhu4}  gave an affirmative answer to Problem \ref{A} for $p>1$ with $p\neq n$.
The part results of Problem \ref{A} for $p=0$ and $0<p<1$ were obtained in \cite{WangLv,WFZ2019-2}

In 2016, Huang-Lutwak-Yang-Zhang \cite{HuangLYZ} defined the dual curvature measure by the variational formula of the dual volume (see \cite{Lutwak1}) for $L_1$ Minkowski combination
 and studied the corresponding Minkowski problem called dual Minkowski problem.
The existence, uniqueness and regularity of this problem and its generalization were studied in
\cite{BoroczkyF,BoroczkyHP,BoroczkyLYZZ,LYZ4,WJ2020,Zhao1,Zhao2,ZhuXY2018}.
The continuity of the solution to this problem for $q<0$ was studied in \cite{WFZ2019-1}.

Recently, the Brunn-Minkowski theory for the Gaussian probability measure $\gamma_n$ has hot attention defined by
\begin{align*}
\gamma_n(E)=\frac{1}{(\sqrt{2\pi})^n}\int_{E}e^{-\frac{|x|^2}{2}}dx,
\end{align*}
where $E$ is a subset of $\mathbb{R}^n$ and $|x|$ is the absolute value of $x\in E$. $\gamma_n(E)$ is called the Gaussian volume of $E$.
Since the Gaussian volume $\gamma_n$ does not have translation invariance and homogeneity, then there are more difficulties to study the corresponding Brunn-Minkowski theory. This makes Brunn-Minkowski theory for $\gamma_n$ quite mysterious and further stimulates people's interest.
The Brunn-Minkowski inequality and the Minkowski inequality for the Gaussian volume $\gamma_n$ were studied in \cite{EskenazisG,BoroczkyK,Saroglou,GardnerZ}.

By the variational formula of the Gaussian volume $\gamma_n$  for $L_p$ Minkowski combination,
the $L_p$ Gaussian surface area measure $S_{p,\gamma_n}(K,\cdot)$ of convex body $K\in \mathcal{K}_o^n$
is defined in \cite{HuangXZ,Liu,LvWang} by

For $K,L\in \mathcal{K}^n_o$ and $p\neq 0$,
\begin{align}\label{Lp-Gaussian-Measure}
\lim_{t\rightarrow0}\frac{\gamma_n(K+_pt\cdot L)-\gamma_n(K)}{t}
=\frac{1}{p}\int_{S^{n-1}}h_L^p(u)dS_{p,\gamma_n}(K,u).
\end{align}
When $p=1$, it is the Gaussian surface area measure defined in \cite{HuangXZ}, that is,
$S_{1,\gamma_n}(K,\cdot)=S_{\gamma_n,K}$.
Note that the $L_p$ Gaussian surface area measure $S_{p,\gamma_n}(K,\cdot)$ is not positively homogeneous.

The corresponding Minkowski problem in Gaussian probability space is called the $L_p$ Gaussian Minkowski problem (see \cite{HuangXZ,Liu,LvWang}) as follows:
\vskip 0.2cm
\noindent
{\bf $L_p$  Gaussian Minkowski problem:}
\emph{For fixed $p$ and a given non-zero finite Borel measure $\mu$ on $S^{n-1}$, what are the necessary and sufficient conditions on $\mu$  in order that
there exists a convex body $K\in \mathcal{K}^n_o$  such that
\begin{align*}
\mu=S_{p,\gamma_n}(K,\cdot)?
\end{align*}
}
If $f$ is the density of the given measure $\mu$,
then the corresponding Monge-Amp\`{e}re type equation on $S^{n-1}$ is as follows:

For $u\in S^{n-1}$,
\begin{align*}
    \frac{1}{(\sqrt{2\pi})^n}e^{-\frac{|\nabla h(u)|^2+h^2(u)}{2}}
h^{1-p}(u)\text{det}(\nabla^2 h(u)+h(u)I)=f(u),
\end{align*}
where $h:S^{n-1}\rightarrow (0, +\infty)$ is the function to be found, $\nabla h, \nabla^2 h$ are the gradient vector and the Hessian matrix of $h$ with respect to an orthonormal frame on $S^{n-1}$, and $I$ is the identity matrix.

In this paper, we mainly consider the continuity of the solution to the $L_p$ Gaussian Minkowski problem and obtain the following result:
\begin{thm}\label{thm0-1}
Suppose $p\geq1$ and $K_i\in\mathcal{K}^n_o$ with $\gamma_n(K_i)\geq1/2$ for $i=0,1,2,\cdots$.
If the sequence $\{S_{p,\gamma_n}(K_i,\cdot)\}$ converges to $S_{p,\gamma_n}(K_0,\cdot)$ weakly, then the sequence $\{K_i\}$ converges to $K_0$ in the Hausdorff metric.
\end{thm}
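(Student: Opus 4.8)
The plan is to deduce from the weak convergence a uniform two‑sided control on the bodies --- the existence of constants $0<r<R<\infty$ with $rB^n\subseteq K_i\subseteq RB^n$ for all $i$, where $B^n$ denotes the unit ball --- and then to extract Hausdorff limits and identify them with $K_0$. The only consequence of the weak convergence I would use directly is the lower–semicontinuity estimate $S_{p,\gamma_n}(K_0,U)\le\liminf_i S_{p,\gamma_n}(K_i,U)$ for every open $U\subseteq S^{n-1}$; the fact that $K_0\in\mathcal{K}_o^n$ (so that $S_{K_0}$ is the surface area measure of a genuine, full‑dimensional, bounded body) will do the rest. The overall strategy follows Zhu's treatment \cite{Zhu4} of the homogeneous ($L_p$) case, with the homogeneity of the $L_p$ surface area measure there replaced here by the decay of the Gaussian weight $e^{-|x|^2/2}$.

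\emph{Uniform upper bound.} Suppose, toward a contradiction, that $\mathrm{diam}(K_i)\to\infty$ along a subsequence. Since $0\in K_i$, the numbers $t_i:=\max\{|x|:x\in K_i\}$ then tend to $\infty$, and after passing to a further subsequence the unit vectors $c_i/t_i$, where $c_i\in K_i$ with $|c_i|=t_i$, converge to some $w^*\in S^{n-1}$. Fix $\delta>0$ and set $C_\delta=\{u\in S^{n-1}:\langle u,w^*\rangle\ge\delta\}$. For $u\in C_\delta$ and $i$ large one has $h_{K_i}(u)\ge\langle c_i,u\rangle\ge\tfrac{\delta}{2}t_i$, so every point of $\partial K_i$ whose outer normal lies in $C_\delta$ has norm at least $\tfrac{\delta}{2}t_i$, where the Gaussian weight is at most $e^{-\delta^2 t_i^2/8}$; since the surface area of $\partial K_i$ is at most that of $\partial(t_iB^n)$, which is of order $t_i^{n-1}$, the integral representation of the Gaussian surface area measure gives $S_{\gamma_n,K_i}(C_\delta)\le c\,t_i^{\,n-1}e^{-\delta^2 t_i^2/8}\to 0$. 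Because $p\ge1$ and $h_{K_i}\ge\tfrac{\delta}{2}t_i\ge1$ on $C_\delta$ for $i$ large, we have $h_{K_i}^{\,1-p}\le1$ there, hence also $S_{p,\gamma_n}(K_i,C_\delta)\to0$. Applying the semicontinuity estimate to the open caps $\{u:\langle u,w^*\rangle>\delta\}$ and letting $\delta\downarrow0$ yields $S_{p,\gamma_n}(K_0,\{u:\langle u,w^*\rangle>0\})=0$. But $S_{p,\gamma_n}(K_0,\cdot)$ has density with respect to $S_{K_0}$ bounded between two positive constants (the density is the positive finite factor $h_{K_0}^{1-p}$ times a Gaussian exponential bounded away from $0$ and $1$ on $\mathrm{supp}\,S_{K_0}$, as $K_0$ is a body with the origin in its interior), so $S_{K_0}$ itself would be concentrated on the closed hemisphere $\{\langle u,w^*\rangle\le0\}$; combined with the closedness relation $\int_{S^{n-1}}u\,dS_{K_0}(u)=0$, this forces $S_{K_0}$ onto the great subsphere $w^{*\perp}\cap S^{n-1}$, making $K_0$ an unbounded cylinder --- a contradiction. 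Hence $K_i\subseteq RB^n$ for some $R<\infty$. This step is the main obstacle.

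\emph{Uniform lower bound.} Here the hypothesis $\gamma_n(K_i)\ge\tfrac12$ is essential and, as the argument shows, sharp. Since $\min_u h_{K_i}(u)=\mathrm{dist}(0,\partial K_i)$, it is enough to bound the latter away from $0$. Assume instead that $\mathrm{dist}(0,\partial K_i)\to0$ along a subsequence; by the uniform upper bound and the Blaschke selection theorem \cite{Schneider}, a sub–subsequence $K_{i_j}$ converges in the Hausdorff metric to some compact convex $K'\subseteq RB^n$ with $0\in\partial K'$ (if some ball about $0$ were contained in $K'$ it would be contained in $K_{i_j}$ for large $j$, contradicting $\mathrm{dist}(0,\partial K_{i_j})\to0$). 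By continuity of the Gaussian density, $\gamma_n(K')=\lim_j\gamma_n(K_{i_j})\ge\tfrac12$. Choosing a supporting halfspace $H^-$ of $K'$ at the origin, we get $\tfrac12\le\gamma_n(K')\le\gamma_n(H^-)=\tfrac12$, whence $\gamma_n(H^-\setminus K')=0$; this is impossible since $K'\subseteq RB^n$ while $\gamma_n(H^-\setminus RB^n)>0$. Therefore $rB^n\subseteq K_i$ for some $r>0$; note that $1/2$ is exactly the Gaussian measure of a halfspace through the origin.

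\emph{Conclusion.} Fix an arbitrary subsequence of $\{K_i\}$. By the two uniform bounds and Blaschke selection it has a sub–subsequence converging in the Hausdorff metric to some $K'\in\mathcal{K}_o^n$ with $rB^n\subseteq K'\subseteq RB^n$. By the continuity of the $L_p$ Gaussian surface area measure in the Hausdorff metric (see \cite{HuangXZ,Liu,LvWang}) and the uniqueness of weak limits, $S_{p,\gamma_n}(K',\cdot)=S_{p,\gamma_n}(K_0,\cdot)$, and by continuity of $\gamma_n$ we also have $\gamma_n(K')\ge\tfrac12$. The uniqueness of the solution to the $L_p$ Gaussian Minkowski problem for $p\ge1$ within the class of convex bodies with Gaussian volume at least $1/2$ (see \cite{HuangXZ,Liu}) then gives $K'=K_0$. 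Since every subsequence of $\{K_i\}$ has a sub–subsequence converging to $K_0$, the whole sequence $\{K_i\}$ converges to $K_0$ in the Hausdorff metric, as claimed.
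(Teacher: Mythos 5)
Your argument is correct, and its endgame (Blaschke selection, weak continuity of $S_{p,\gamma_n}$ under Hausdorff convergence, the half-space argument showing the limit body has the origin in its interior, and the uniqueness theorem for $p\geq1$ within the class $\{\gamma_n\geq 1/2\}$) coincides with the paper's proof of Theorem \ref{thm0-1}; the genuine difference lies in how you obtain the uniform upper bound, which is the content of the paper's Lemma \ref{Conti-bound}. The paper gets boundedness from the entropy-type functional $\Phi_p$ in \eqref{Phi}: the Gaussian Minkowski inequality of Lemma \ref{Conti-Min-ineq} (a consequence of the Ehrhard inequality) bounds $\Phi_p(K_i)$ from below in terms of the total masses $|S_{p,\gamma_n}(K_i,\cdot)|$, while Lemma \ref{Conti-cos-bound} (a uniform positive lower bound for $\int_{S^{n-1}}(u\cdot v)_+^p\,dS_{p,\gamma_n}(K_i,v)$ extracted from the weak convergence) bounds it above by $-c_1R_i^p/(p\gamma_n(R_iB_n))+\log\gamma_n(R_iB_n)$, forcing $R_i$ to stay bounded. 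You instead argue directly on the measures: if $R_i\to\infty$ in direction $w^*$, the support function on a cap about $w^*$ is of order $R_i$, so the boundary points with normals in that cap lie far out, the Gaussian weight $e^{-|x|^2/2}$ (together with monotonicity of surface area and $h_{K_i}^{1-p}\leq1$ for $p\geq1$) annihilates the cap's measure, and the portmanteau inequality then forces $S_{p,\gamma_n}(K_0,\cdot)$, hence $S_{K_0}$, onto a closed hemisphere, which is incompatible with $\int_{S^{n-1}}u\,dS_{K_0}(u)=0$ for a body with interior points. This is Zhu's scheme with Gaussian decay replacing homogeneity; it buys independence from the Ehrhard-based Minkowski inequality (and your upper bound, unlike the paper's Lemma \ref{Conti-bound}, does not use $\gamma_n(K_i)\geq1/2$ at all), at the cost of doing the boundary estimates by hand. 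Two small points worth tightening: replace the informal ``unbounded cylinder'' justification by the standard fact that the surface area measure of an $n$-dimensional convex body is not concentrated on any great subsphere (for instance via Cauchy's projection formula, $\mathrm{vol}_{n-1}(K_0|w^{*\perp})=\tfrac12\int_{S^{n-1}}|w^*\cdot v|\,dS_{K_0}(v)>0$), and note explicitly that the portmanteau lower bound is applied along the blow-up subsequence, which is legitimate because that subsequence still converges weakly to $S_{p,\gamma_n}(K_0,\cdot)$.
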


Besides,  we obtain that the solution to the $L_p$ Guassian Minkowski
problem is continuous with respect to $p$.

\begin{thm}\label{thm0-2}
Suppose $p_i\geq1$ and $K_i\in\mathcal{K}^n_o$ with $\gamma_n(K_i)\geq1/2$ for $i=0,1,2,\cdots$.
If $S_{p_i,\gamma_n}(K_i,\cdot)=S_{p_0,\gamma_n}(K_0,\cdot)$, then the sequence $\{K_i\}$ converges to $K_0$ in the Hausdorff metric  as $\{p_i\}$ converges to $p_0$.
\end{thm}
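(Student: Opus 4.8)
The plan is to reduce Theorem~\ref{thm0-2} to Theorem~\ref{thm0-1} by extracting a convergent subsequence and identifying its limit. First I would establish uniform bounds on the bodies $\{K_i\}$: using the hypothesis $\gamma_n(K_i)\geq 1/2$ together with the assumed identity $S_{p_i,\gamma_n}(K_i,\cdot)=S_{p_0,\gamma_n}(K_0,\cdot)$, one shows that the $K_i$ cannot escape to infinity (a body with large inradius in some direction, or with support function unbounded, would have $\gamma_n$-surface measure with total mass either blowing up or collapsing in a way incompatible with the fixed right-hand side $S_{p_0,\gamma_n}(K_0,\cdot)$), and also that they cannot degenerate to a lower-dimensional set or collapse toward the origin (which would force $\gamma_n(K_i)\to 0$ or make the support functions tend to $0$, again contradicting the fixed total measure via the Monge-Amp\`ere equation and the lower bound $\gamma_n(K_i)\geq 1/2$). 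These are presumably the a priori estimates already used in establishing Theorem~\ref{thm0-1}, so I would invoke them. By the Blaschke selection theorem, every subsequence of $\{K_i\}$ has a further subsequence converging in the Hausdorff metric to some $\widehat{K}\in\mathcal{K}^n_o$, and the uniform positivity of the support functions guarantees $\widehat{K}$ contains the origin in its interior.

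Next I would pass to the limit in the measure identity. Along the convergent subsequence $K_{i_j}\to\widehat{K}$ with $p_{i_j}\to p_0$, I claim $S_{p_{i_j},\gamma_n}(K_{i_j},\cdot)\to S_{p_0,\gamma_n}(\widehat{K},\cdot)$ weakly. This is a joint-continuity statement for the $L_p$ Gaussian surface area measure in the pair $(p,K)$; it follows from the explicit integral representation of $S_{p,\gamma_n}(K,\cdot)$ against a test function — for $f\in C(S^{n-1})$ one has $\int f\,dS_{p,\gamma_n}(K,\cdot)$ expressed via $h_K^{1-p}$ and the Gaussian weight $e^{-(|\nabla h_K|^2+h_K^2)/2}$ integrated against the classical surface area measure $S_K$ — and the convergence $h_{K_{i_j}}\to h_{\widehat{K}}$ in $C^1$ (using standard convergence of support functions and the fact that the limit body is nondegenerate), $p_{i_j}\to p_0$, and weak convergence $S_{K_{i_j}}\to S_{\widehat K}$ together give the result since $h_{\widehat K}$ is bounded away from $0$ and $\infty$. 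Combined with the hypothesis, this yields $S_{p_0,\gamma_n}(\widehat{K},\cdot)=S_{p_0,\gamma_n}(K_0,\cdot)$.

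Now apply Theorem~\ref{thm0-1} directly: the constant sequence argument, or rather the sequence $(\widehat K,\widehat K,\ldots)$ together with $K_0$, has $S_{p_0,\gamma_n}$-measures equal (hence trivially weakly convergent to $S_{p_0,\gamma_n}(K_0,\cdot)$), and $\gamma_n(\widehat K)\geq 1/2$ by continuity of Gaussian volume under Hausdorff convergence; Theorem~\ref{thm0-1} then forces $\widehat K=K_0$. Thus every subsequence of $\{K_i\}$ has a further subsequence converging to $K_0$, which gives $K_i\to K_0$ in the Hausdorff metric, completing the proof.

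The main obstacle I anticipate is the a priori estimate showing the sequence $\{K_i\}$ stays in a compact subset of $\mathcal{K}^n_o$ uniformly in $p_i$ — in particular the lower bound keeping the $K_i$ away from the origin and from lower-dimensional degeneracy. Because the $p_i$ vary (only $p_i\geq 1$ is assumed, with $p_i\to p_0$), the estimates must be uniform in $p$ over a neighborhood of $p_0$; one must rule out the possibility that, as $p_i$ drifts, the geometry of $K_i$ deteriorates even though the Gaussian surface measure is pinned. Handling the borderline homogeneity-like behavior near $p=1$ and ensuring the Gaussian weight $e^{-(|\nabla h|^2+h^2)/2}$ does not conspire with $h^{1-p}$ to allow escape requires care; this is where I would expect to reuse, in a $p$-uniform form, the boundedness and non-degeneracy lemmas underlying Theorem~\ref{thm0-1}, together with the constraint $\gamma_n(K_i)\geq 1/2$, which is precisely what prevents the Zhu-type counterexample (shrinking bodies) from occurring here.
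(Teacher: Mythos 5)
Your overall architecture matches the paper's: uniform boundedness, Blaschke selection, the limit body lies in $\mathcal{K}^n_o$ (via $\gamma_n\geq 1/2$ and Lemma \ref{Conti-polar-bound}), joint weak continuity of $(p,K)\mapsto S_{p,\gamma_n}(K,\cdot)$, and then uniqueness (Lemma \ref{uniqueness}, which is exactly what your ``constant sequence'' use of Theorem \ref{thm0-1} amounts to). However, the one genuinely new ingredient of this theorem --- boundedness of $\{K_i\}$ uniformly in the varying exponents $p_i$ --- is precisely the step you leave as ``invoke the a priori estimates of Theorem \ref{thm0-1} in a $p$-uniform form,'' and your heuristic for it (total mass of the measures blowing up or collapsing) cannot work as stated, because the hypothesis pins $S_{p_i,\gamma_n}(K_i,\cdot)=S_{p_0,\gamma_n}(K_0,\cdot)$, so the total mass is literally constant. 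The paper's Lemma \ref{bound-2} closes this gap as follows: substitute the fixed measure into $\Phi_{p_i}(K_i)=-\frac{1}{p_i\gamma_n(K_i)}\int_{S^{n-1}}h_{K_i}^{p_i}\,dS_{p_0,\gamma_n}(K_0,\cdot)+\log\gamma_n(K_i)$, obtain the lower bound $-2|S_{p_0,\gamma_n}(K_0,\cdot)|+\log\gamma_n(B_n)$ from the Gaussian Minkowski inequality (Lemma \ref{Conti-Min-ineq}) and $\gamma_n(K_i)\geq 1/2$, and for the upper bound assume without loss of generality $1\leq p_i<2p_0$ and use $h_{K_i}(v)\geq R_i(u_i\cdot v)_+$ together with $(u_i\cdot v)_+^{p_i}\geq(u_i\cdot v)_+^{2p_0}$ (valid since $(u_i\cdot v)_+\leq 1$), so that $\int_{S^{n-1}}(u_i\cdot v)_+^{2p_0}dS_{p_0,\gamma_n}(K_0,v)\geq c_3>0$ uniformly in the direction $u_i$ because $S_{p_0,\gamma_n}(K_0,\cdot)$ is not concentrated in any closed hemisphere; if $R_i\to\infty$ this forces $\Phi_{p_i}(K_i)\to-\infty$, a contradiction. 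Without some such exponent-uniform lower bound your compactness step is unproven, so this is a real, though repairable, gap.

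A second, smaller point: your justification of joint weak continuity invokes $h_{K_{i_j}}\to h_{\widehat K}$ in $C^1$, which Hausdorff convergence does not provide in general (support functions converge uniformly, not in $C^1$). The fix is the paper's representation $dS_{p,\gamma_n}(K,\cdot)=h_K^{1-p}\,dS_{\gamma_n,K}$ (Lemma \ref{definition-Lp-Gaussian}): since $h_{L_0}$ is bounded away from $0$ and $\infty$, one has $h_{K_{i_j}}^{1-p_{i_j}}\to h_{L_0}^{1-p_0}$ uniformly, and $S_{\gamma_n,K_{i_j}}\to S_{\gamma_n,L_0}$ weakly by Lemma \ref{weak-conve}; together these give $S_{p_{i_j},\gamma_n}(K_{i_j},\cdot)\to S_{p_0,\gamma_n}(L_0,\cdot)$ weakly without touching gradients of support functions. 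With these two repairs your argument coincides with the paper's proof.
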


\vskip 0.5cm
\section{Preliminaries}\label{Preliminaries}
\vskip 0.3cm

In this section, we list some notations and recall some basic facts about convex bodies.
According to the context of this paper, $|\cdot|$ can denote different meanings: the absolute value and the total mass of a finite measure.
For vectors $x,y\in\mathbb{R}^n$, $x\cdot y$ denotes the standard inner product in $\mathbb{R}^n$. $S^{n-1}$ denotes the boundary of the Euclidean unit
ball $B_n=\{x\in\mathbb{R}^n: \sqrt{x\cdot x}\leq 1\}$ and is called unit sphere. Let $\omega_n$ denote the $n$-dimensional volume (Lebesgue measure) of $B_n$. Let $\partial K$ and $\text{int}~K$ denote
the boundary and the set of all interiors of convex body $K$ in $\mathbb{R}^n$, respectively.
$\partial' K$ is the subset of $\partial K$ with unique outer unit normal.

The
support function $h_K: \mathbb{R}^n\rightarrow \mathbb{R}$ of $K\in \mathcal{K}^n$ is defined  by
\begin{align}\label{support-function}
h_K(x)=\max\{x\cdot y: y\in K\},\quad x\in \mathbb{R}^n.
\end{align}
A convex body is uniquely determined by its support function.
Support functions are positively homogeneous of degree one and subadditive.
For $K\in \mathcal{K}^n_o$, its support function $h_K$ is continuous and strictly positive on the unit sphere $S^{n-1}$.

The radial function $\rho_K: \mathbb{R}^n\setminus\{0\}\rightarrow \mathbb{R}$ of convex body $K\in \mathcal{K}^n_o$ is another important function for $K\in \mathcal{K}^n_o$, and it is given by
\begin{align*}
\rho_K(x)=\max\{\lambda>0: \lambda x\in K\},\quad x\in \mathbb{R}^n\setminus\{0\}.
\end{align*}
Note that the radial function $\rho_K$
of $K\in \mathcal{K}^n_o$ is positively homogeneous of degree $-1$, and it is continuous and strictly positive on the unit sphere $S^{n-1}$.
For each $u\in S^{n-1}$, $\rho_K(u)u\in\partial K$.

The set $\mathcal{K}^n_o$ can be endowed with Hausdorff metric and radial metric which mean the distance between two convex bodies.
The Hausdorff metric of $K, L\in \mathcal{K}^n_o$ is defined by
\begin{align*}
|| h_K-h_L ||=\mathop{\max}\limits_{u\in S^{n-1}}|h_K(u)-h_L(u)|.
\end{align*}
The radial metric of $K, L\in \mathcal{K}^n_o$
is defined by
\begin{align*}
|| \rho_K-\rho_L ||=\mathop{\max}\limits_{u\in S^{n-1}}|\rho_K(u)-\rho_L(u)|.
\end{align*}
The two metrics are mutually equivalent, that is, for $K, K_i\in \mathcal{K}^n_o$,
\begin{align*}
h_{K_i}\rightarrow h_K~\text{uniformly}\quad \text{if and only if} \quad  \rho_{K_i}\rightarrow \rho_K~\text{uniformly}.
\end{align*}
If $|| h_{K_i}-h_K ||\rightarrow0$ or $||\rho_{K_i}-\rho_K ||\rightarrow0$ as $i\rightarrow+\infty$, we call the sequence $\{K_{i}\}$ converges to $K$.

The polar body $K^*$ of $K\in \mathcal{K}^n_o$
is given by
\begin{align*}
K^*=\{ x\in\mathbb{R}^n: x\cdot y\leq1~\text{for~all}~y\in K\}.
\end{align*}
It is clear that $K^*\in \mathcal{K}^n_o$ and $K=(K^{*})^*$. There exists  an important fact on $\mathbb{R}^n\setminus\{0\}$ between $K$ and its polar body $K^*$:
\begin{align*}
h_K=1/\rho_{K^*}\quad \text{and}\quad \rho_K=1/h_{K^*}.
\end{align*}
Then, for $K,K_i\in \mathcal{K}^n_o$, we can obtain the following result:
\begin{align*}
K_i\rightarrow K\quad \text{if and only if} \quad  K^*_i\rightarrow K^*.
\end{align*}

For $f\in C^+(S^{n-1})$, the Wullf shape $[f]$ of $f$ is defined by
\begin{align*}
[f]=\{ x\in\mathbb{R}^n: x\cdot u\leq f(u)~\text{for~all}~ u\in S^{n-1}\}.
\end{align*}
It is not hard to see that $[f]$ is a convex body in $\mathbb{R}^n$ and $h_{[f]}\leq f$. In addition, $[h_K]=K$ for all $K\in \mathcal{K}^n_o$.

By the concept of Wullf shape, the $L_p$ Minkowski combination can be defined for all $p\in\mathbb{R}$. When $p\neq 0$, for
$K,L\in \mathcal{K}^n_o$ and $s,t\in\mathbb{R}$ satisfying that
 $sh_K^p+th_L^p$ is strictly positive on $S^{n-1}$, the $L_p$ Minkowski combination $s\cdot K+_pt\cdot L$ is defined by
 \begin{align}\label{Lp-Min-comb}
 s\cdot K+_pt\cdot L=[(sh_K^p+th_L^p)^{1/p}].
 \end{align}
When $p=0$, the $L_p$ Minkowski combination $s\cdot K+_0t\cdot L$ is defined by
 \begin{align*}
 s\cdot K+_0t\cdot L=[h_K^sh_L^t].
 \end{align*}

\vskip 0.5cm
\section{The proof of Theorem \ref{thm0-1}}\label{}
\vskip 0.3cm

In this section, we consider the continuity of the solution to the $L_p$ Gaussian Minkowski problem
and obtain the result for $p\geq1$.
By the variational formula \eqref{Lp-Gaussian-Measure} and Ehrhard inequality, the following Minkowski-type inequality is obtained in \cite{HuangXZ,Liu}.

\begin{lem}[\cite{HuangXZ}]\label{Conti-Min-ineq}
Suppose $K,L\in\mathcal{K}_o^n$, then, for $p\geq 1$,
\begin{align*}
\frac{1}{p}\int_{S^{n-1}}h_L^p(u)-h^p_K(u)dS_{p,\gamma_n}(K,u)\geq\gamma_n(K)\log\frac{\gamma_n(L)}{\gamma_n(K)},
\end{align*}
with equality if and only if $K=L$.
\end{lem}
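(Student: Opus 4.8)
The plan is to derive this $L_p$ Minkowski-type inequality ($p\geq1$) from the classical ($p=1$) Gaussian Minkowski inequality, hence ultimately from Ehrhard's inequality, by comparing the $L_p$-Minkowski segment joining $K$ to $L$ with the ordinary Minkowski segment and then differentiating at $K$ via the variational formula \eqref{Lp-Gaussian-Measure}. For $t\in[0,1]$ set
\begin{align*}
\psi(t)=\gamma_n\big((1-t)\cdot K+_p t\cdot L\big),\qquad \psi_1(t)=\gamma_n\big((1-t)K+tL\big),
\end{align*}
so that $\psi(0)=\psi_1(0)=\gamma_n(K)$, $\psi(1)=\psi_1(1)=\gamma_n(L)$, and recall that $0<\gamma_n(M)<1$ for every convex body $M$. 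Since $x\mapsto x^{1/p}$ is concave for $p\geq1$, one has $\big((1-t)h_K^p+t h_L^p\big)^{1/p}\geq(1-t)h_K+t h_L$ pointwise on $S^{n-1}$; because the Wulff shape is monotone (i.e.\ $f\leq g$ on $S^{n-1}$ implies $[f]\subseteq[g]$) and $\big[(1-t)h_K+t h_L\big]=(1-t)K+tL$, this yields $(1-t)\cdot K+_p t\cdot L\supseteq(1-t)K+tL$, and hence $\psi(t)\geq\psi_1(t)$ for all $t\in[0,1]$.

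The second ingredient is that $\log\psi_1$ is concave on $[0,1]$. By Ehrhard's inequality, $t\mapsto\Phi^{-1}(\psi_1(t))$ is concave, where $\Phi$ denotes the standard Gaussian distribution function. Moreover $\log\Phi$ is increasing and strictly concave: writing $\phi=\Phi'$, one computes $(\phi/\Phi)'=-\phi\,(x\Phi+\phi)/\Phi^2$, and $x\Phi(x)+\phi(x)>0$ for every $x$ because this quantity has derivative $\Phi(x)>0$ and tends to $0$ as $x\to-\infty$. Consequently $\log\psi_1=\log\Phi\circ\big(\Phi^{-1}\circ\psi_1\big)$ is the composition of the increasing concave function $\log\Phi$ with the concave function $\Phi^{-1}\circ\psi_1$, hence concave on $[0,1]$.

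The third ingredient is the first variation of $\psi$ at $0$. Using the scaling identity $(1-t)\cdot K+_p t\cdot L=\big(1+\tfrac{t}{1-t}\big)^{-1/p}\big(K+_p\tfrac{t}{1-t}\cdot L\big)$ together with \eqref{Lp-Gaussian-Measure} and its special case $\tfrac{d}{dr}\gamma_n(rK)\big|_{r=1}=\int_{S^{n-1}}h_K^p(u)\,dS_{p,\gamma_n}(K,u)$ (which follows from \eqref{Lp-Gaussian-Measure} with $L=K$, since $K+_p t\cdot K=(1+t)^{1/p}K$), these combine to give
\begin{align*}
\psi'(0^+)=\frac1p\int_{S^{n-1}}\big(h_L^p(u)-h_K^p(u)\big)\,dS_{p,\gamma_n}(K,u).
\end{align*}
Putting the three ingredients together, for $t\in(0,1]$ we obtain, using $\log\psi\geq\log\psi_1$ with equality at $t=0$ for the first step and concavity of $\log\psi_1$ for the second,
\begin{align*}
\frac{\log\psi(t)-\log\psi(0)}{t}\;\geq\;\frac{\log\psi_1(t)-\log\psi_1(0)}{t}\;\geq\;\log\psi_1(1)-\log\psi_1(0)\;=\;\log\frac{\gamma_n(L)}{\gamma_n(K)}.
\end{align*}
Letting $t\to0^+$ turns the left-hand side into $\psi'(0^+)/\gamma_n(K)$, and multiplying through by $\gamma_n(K)$ gives exactly the asserted inequality.

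For the equality case I would run this last chain backwards: overall equality squeezes the right derivative of the concave function $\log\psi_1$ at $0$ to equal its chord slope on $[0,1]$, which forces $\log\psi_1$ to be affine there; since $\log\Phi$ is strictly concave and $\Phi^{-1}\circ\psi_1$ is concave, a short argument then forces $\Phi^{-1}\circ\psi_1$ to be constant, i.e.\ $\gamma_n((1-t)K+tL)\equiv\gamma_n(K)$ on $[0,1]$, and the equality characterization of Ehrhard's inequality for convex bodies gives $K=L$; conversely, $K=L$ makes $\psi$ constant and both sides vanish. The step I expect to be the main obstacle is making the first-variation computation fully rigorous — in particular, justifying that the dilation factor and the $L_p$-combination perturb $\gamma_n$ additively to first order (equivalently, establishing the Wulff-shape form of \eqref{Lp-Gaussian-Measure} in the direction $h_L^p-h_K^p$) — while the equality analysis is a secondary delicate point resting on the equality conditions of Ehrhard's inequality.
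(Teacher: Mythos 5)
Your proof is correct and is essentially the argument the paper itself points to: the paper does not prove Lemma \ref{Conti-Min-ineq} but imports it from \cite{HuangXZ,Liu}, remarking that it follows from the variational formula \eqref{Lp-Gaussian-Measure} and Ehrhard's inequality, which is exactly your route (the power-mean comparison $((1-t)h_K^p+th_L^p)^{1/p}\geq(1-t)h_K+th_L$ to place the Minkowski segment inside the $L_p$ segment, concavity of $\log\Phi$ composed with Ehrhard to get log-concavity of $\gamma_n$ along the Minkowski segment, and the first variation at $t=0$). The two inputs you flag as delicate --- the Wulff-shape form of \eqref{Lp-Gaussian-Measure} in the direction $h_L^p-h_K^p$ (equivalently $dS_{p,\gamma_n}(K,\cdot)=h_K^{1-p}dS_{\gamma_n,K}$ together with the Aleksandrov-type variational lemma for $\gamma_n$) and the equality characterization of Ehrhard's inequality for convex bodies --- are precisely the ingredients supplied in the cited sources, so they are citable facts rather than gaps in your argument.
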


The following lemmas will be needed.

\begin{lem}\label{Conti-cos-bound}
Suppose $p\geq 1$ and $K_i\in \mathcal{K}^n_o$ for $i=0,1,2,\cdots$.
If the sequence $\{S_{p,\gamma_n}(K_i,\cdot)\}$ converges to $S_{p,\gamma_n}(K_0,\cdot)$ weakly, then there exists a constant $c_1>0$ such that
\begin{align}
\int_{S^{n-1}}(u\cdot v)_+^pdS_{p,\gamma_n}(K_i,v)\geq c_1,
\end{align}
for all $u\in S^{n-1}$ and $i\in\{0,1,2,\cdots\}$, where, $(u\cdot v)_+=\max\{0,u\cdot v\}$.
\end{lem}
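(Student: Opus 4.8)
The plan is to write $\mu_i=S_{p,\gamma_n}(K_i,\cdot)$ and $F_i(u)=\int_{S^{n-1}}(u\cdot v)_+^p\,d\mu_i(v)$, show first that for \emph{each} fixed body $K_i$ one has $F_i(u)\ge m_i>0$ uniformly in $u$, and then upgrade this to a bound $m_i\ge c_1>0$ that is also uniform in $i$, using the weak convergence. For the first part I would invoke the known representation of the $L_p$ Gaussian surface area measure, which the Monge-Amp\`{e}re equation in the introduction makes explicit: $d\mu_i(v)=g_i(v)\,dS_{K_i}(v)$, where $S_{K_i}$ is the classical surface area measure of $K_i$ and $g_i(v)=(\sqrt{2\pi})^{-n}e^{-|x_{K_i}(v)|^2/2}h_{K_i}^{1-p}(v)$, with $x_{K_i}(v)\in\partial K_i$ the boundary point whose outer unit normal is $v$ (see \cite{HuangXZ}). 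Since $K_i\in\mathcal{K}^n_o$ has bounded boundary and strictly positive support function, $g_i$ is bounded below by a positive constant on $S^{n-1}$, so it is enough to bound $\int_{S^{n-1}}(u\cdot v)_+^p\,dS_{K_i}(v)$ from below.

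Next I would use that $\int_{S^{n-1}}(u\cdot v)_+\,dS_{K_i}(v)$ equals the $(n-1)$-dimensional volume of the orthogonal projection of $K_i$ onto $u^\perp$ (Cauchy's projection formula together with $\int_{S^{n-1}}v\,dS_{K_i}(v)=0$), hence is strictly positive because $K_i$ has non-empty interior. Therefore $S_{K_i}$ is not concentrated on the closed hemisphere $\{v:u\cdot v\le 0\}$, and choosing $\varepsilon=\varepsilon(u)>0$ small enough gives $S_{K_i}(\{v:u\cdot v>\varepsilon\})>0$, so that $\int(u\cdot v)_+^p\,dS_{K_i}(v)\ge\varepsilon^p S_{K_i}(\{v:u\cdot v>\varepsilon\})>0$, whence $F_i(u)>0$ for every $u$. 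As $u\mapsto F_i(u)$ is continuous on the compact set $S^{n-1}$ (dominated convergence), $m_i:=\min_{u\in S^{n-1}}F_i(u)>0$ for each $i\in\{0,1,2,\dots\}$.

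To make the bound uniform in $i$ I would argue by contradiction. If $\inf_i m_i=0$, pick indices $i_k$ and directions $u_k\in S^{n-1}$ with $F_{i_k}(u_k)\to 0$; since $m_i>0$ for each $i$, no index can repeat infinitely often, so after passing to a subsequence $i_k\to\infty$, and by compactness $u_k\to u_*$ for some $u_*\in S^{n-1}$. The kernel $(u,v)\mapsto(u\cdot v)_+^p$ is uniformly continuous on $S^{n-1}\times S^{n-1}$, so $\sup_{v}|(u_k\cdot v)_+^p-(u_*\cdot v)_+^p|\to 0$; combined with the weak convergence $\mu_{i_k}\to\mu_0$ (which bounds the total masses $|\mu_{i_k}|$ and yields $\int(u_*\cdot v)_+^p\,d\mu_{i_k}(v)\to F_0(u_*)$), this forces $F_{i_k}(u_k)\to F_0(u_*)\ge m_0>0$, contradicting $F_{i_k}(u_k)\to 0$. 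Hence $c_1:=\inf_i m_i>0$ works.

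The main obstacle I anticipate is the second step: proving the \emph{quantitative} non-degeneracy $F_i(u)>0$ for all $u$ and then turning it, via continuity and compactness, into a strictly positive lower bound for the individual body $K_i$ --- this is where the geometry (full-dimensionality of $K_i$, the projection identity, and the positivity and boundedness coming from $K_i\in\mathcal{K}^n_o$) really enters. The final passage to uniformity in $i$ is soft; its only delicate point is that weak convergence controls each fixed test function but not a moving family, which is why one first extracts a convergent subsequence of the directions $u_k$. Note that the hypothesis $\gamma_n(K_i)\ge 1/2$ is not needed for this lemma.
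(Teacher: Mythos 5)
Your proposal is correct, but it reaches the uniform lower bound by a different route than the paper. The paper sets $g_i(x)=\int_{S^{n-1}}(x\cdot v)_+^p\,dS_{p,\gamma_n}(K_i,v)$, observes that this is (after taking a $p$-th root, strictly speaking, since $g_i$ itself is homogeneous of degree $p$) a support function, and then uses the standard fact that pointwise convergence of support functions on $S^{n-1}$ is automatically uniform; positivity of the limit function $g_0$ comes from $S_{p,\gamma_n}(K_0,\cdot)$ not being concentrated on any closed hemisphere, and uniform convergence transfers the bound to all large $i$. You instead prove the per-body positivity $m_i=\min_u F_i(u)>0$ quantitatively --- by bounding the density of $S_{p,\gamma_n}(K_i,\cdot)$ with respect to $S_{K_i}$ from below (using $e^{-|x|^2/2}\ge e^{-R_i^2/2}$ on $\partial K_i$ and $h_{K_i}^{1-p}\ge c>0$) and invoking Cauchy's projection formula --- and then obtain uniformity in $i$ by a subsequence/contradiction argument with moving directions $u_k\to u_*$, using uniform continuity of the kernel $(u,v)\mapsto(u\cdot v)_+^p$ and boundedness of the total masses coming from weak convergence. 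Both arguments rest on the same geometric input (measures of bodies in $\mathcal{K}^n_o$ are not concentrated on closed hemispheres); yours is more elementary and self-contained, avoids the support-function machinery (and in particular the paper's small slip that $g_i$ is sublinear only for $p=1$), and it handles explicitly the finitely many initial indices that uniform convergence alone does not cover, whereas the paper's argument is shorter and slicker. One minor imprecision on your side: the pointwise density formula $g_i(v)=(\sqrt{2\pi})^{-n}e^{-|x_{K_i}(v)|^2/2}h_{K_i}^{1-p}(v)$ presumes a single-valued inverse Gauss map, which general convex bodies need not have; since you only use a lower bound, you should phrase it measure-theoretically, i.e. $S_{\gamma_n,K_i}(\eta)\ge(\sqrt{2\pi})^{-n}e^{-R_i^2/2}S_{K_i}(\eta)$ via the representation in Lemma \ref{definition-Lp-Gaussian}, which is all your argument needs.
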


\begin{proof}
For $x\in \mathbb{R}^{n}$, let
$$
g_i(x)=\int_{S^{n-1}}(x\cdot v)_+^pdS_{p,\gamma_n}(K_i,v).
$$
It is not hard to see that $g_i$ is a sublinear function on $\mathbb{R}^{n}$.
Then, there exists a convex body in $\mathbb{R}^{n}$ such that its support function
is equal to $g_i$.
By the fact that $\{S_{\gamma_n,K_i}\}$ converges to $S_{\gamma_n,K_0}$ weakly, we have the sequence $\{g_i\}$ converges to $g_0$ pointwise.
Since pointwise and uniform convergence of support functions are equivalent on $S^{n-1}$,
then $\{g_i\}$ converges to $g_0$ on $S^{n-1}$ uniformly.

Since $S_{p,\gamma_n}(K_0,\cdot)$ is not concentrated in any closed hemisphere of $S^{n-1}$, then $g_0>0$ on $S^{n-1}$.
Together with the compactness of $S^{n-1}$ and the continuity of $g_0$ on $S^{n-1}$,
we obtain there exists a constant $c_2>0$ such that
\begin{align*}
g_0(u)\geq c_2,
\end{align*}
for all $u\in S^{n-1}$.
Since $\{g_i\}$ converges to $g_0$ uniformly on $S^{n-1}$ , then there exists a constant $c_1>0$ such that
\begin{align*}
g_i(u)\geq c_1,
\end{align*}
for all $u\in S^{n-1}$ and $i=0,1,2,\cdots$.
\end{proof}

The weak convergence of the $L_p$ Guassian
surface area measures implies the sequence of the corresponding convex bodies
is bounded.

\begin{lem}\label{Conti-bound}
Suppose $p\geq 1$ and $K_i\in \mathcal{K}^n_o$ with  $\gamma_n(K_i)\geq1/2$ for $i=0,1,2,\cdots$.
If the sequence $\{S_{p,\gamma_n}(K_i,\cdot)\}$ converges to $S_{p,\gamma_n}(K_0,\cdot)$ weakly, then the sequence $\{K_i\}$ is bounded.
\end{lem}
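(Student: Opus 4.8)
The plan is to argue by contradiction, reducing everything to the pointwise estimate $h_{K_i}(v)\ge R_i(u_i\cdot v)_+$, Lemma \ref{Conti-cos-bound}, and a uniform upper bound on $\int_{S^{n-1}}h_{K_i}^p\,dS_{p,\gamma_n}(K_i,\cdot)$. Suppose $\{K_i\}$ is unbounded. Since $0\in K_i$, the number $R_i:=\max_{u\in S^{n-1}}h_{K_i}(u)=\max_{x\in K_i}|x|$ is the radius of the smallest ball centered at the origin containing $K_i$, so after passing to a subsequence we may assume $R_i\to+\infty$. Pick $u_i\in S^{n-1}$ with $h_{K_i}(u_i)=R_i$ and $z_i\in K_i$ with $z_i\cdot u_i=R_i$. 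Then $R_i=z_i\cdot u_i\le|z_i|\le\max_{x\in K_i}|x|=R_i$, which forces equality in the Cauchy--Schwarz inequality, so $z_i=R_iu_i\in K_i$. Hence $h_{K_i}(v)=\max_{x\in K_i}x\cdot v\ge(R_iu_i)\cdot v$, and since $h_{K_i}\ge0$,
\begin{align*}
h_{K_i}^p(v)\ge R_i^p\,(u_i\cdot v)_+^p\qquad\text{for all }v\in S^{n-1}.
\end{align*}

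Next I would bound $\int_{S^{n-1}}h_{K_i}^p\,dS_{p,\gamma_n}(K_i,\cdot)$ from above, uniformly in $i$. Applying Lemma \ref{Conti-Min-ineq} with $K=K_i$ and $L=B_n$ (so $h_{B_n}\equiv1$) and rearranging gives
\begin{align*}
\frac1p\int_{S^{n-1}}h_{K_i}^p\,dS_{p,\gamma_n}(K_i,\cdot)\le\frac1p\,S_{p,\gamma_n}(K_i,S^{n-1})+\gamma_n(K_i)\log\frac{\gamma_n(K_i)}{\gamma_n(B_n)}.
\end{align*}
Here $S_{p,\gamma_n}(K_i,S^{n-1})=\int_{S^{n-1}}1\,dS_{p,\gamma_n}(K_i,\cdot)\to S_{p,\gamma_n}(K_0,S^{n-1})<\infty$ by the assumed weak convergence, and since $\tfrac12\le\gamma_n(K_i)<1$ the last term is at most $\log\bigl(1/\gamma_n(B_n)\bigr)$; hence there is $C>0$ with $\int_{S^{n-1}}h_{K_i}^p\,dS_{p,\gamma_n}(K_i,\cdot)\le C$ for all large $i$. (Equivalently, the variational formula \eqref{Lp-Gaussian-Measure} with $L=K$ gives the identity $\int_{S^{n-1}}h_K^p\,dS_{p,\gamma_n}(K,\cdot)=n\gamma_n(K)-\frac{1}{(\sqrt{2\pi})^n}\int_K|x|^2e^{-|x|^2/2}\,dx\le n$, yielding the same bound with $C=n$.) Combining the two displays with Lemma \ref{Conti-cos-bound},
\begin{align*}
C\ge\int_{S^{n-1}}h_{K_i}^p\,dS_{p,\gamma_n}(K_i,v)\ge R_i^p\int_{S^{n-1}}(u_i\cdot v)_+^p\,dS_{p,\gamma_n}(K_i,v)\ge c_1R_i^p,
\end{align*}
so $R_i\le(C/c_1)^{1/p}$, contradicting $R_i\to+\infty$. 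Therefore $\{K_i\}$ is bounded.

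The step I expect to be the main obstacle is precisely the uniform upper bound on $\int_{S^{n-1}}h_{K_i}^p\,dS_{p,\gamma_n}(K_i,\cdot)$. For the classical $L_p$ surface area measure this integral equals $pV_n(K_i)$ by the homogeneity of volume, but $\gamma_n$ is not homogeneous, so the bound must instead be extracted either from the Gaussian Minkowski inequality (Lemma \ref{Conti-Min-ineq}), where the hypothesis $\gamma_n(K_i)\ge1/2$ is convenient for controlling the logarithmic remainder term, or from a direct computation of $\tfrac{d}{dt}\big|_{t=0}\gamma_n\bigl((1+t)^{1/p}K_i\bigr)$. Once this is in hand, the remaining steps—identifying $R_iu_i\in K_i$ and invoking Lemma \ref{Conti-cos-bound}—are routine.
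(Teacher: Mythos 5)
Your proof is correct, and at its core it relies on the same ingredients as the paper's: Lemma \ref{Conti-Min-ineq} applied with $L=B_n$, the pointwise estimate $h_{K_i}(v)\geq R_i(u_i\cdot v)_+$ coming from $R_iu_i\in K_i$, and the uniform lower bound of Lemma \ref{Conti-cos-bound}. The difference is organizational but genuinely simplifying. The paper runs the contradiction through the normalized functional $\Phi_p(K)=-\frac{1}{p\gamma_n(K)}\int_{S^{n-1}} h_K^p\,dS_{p,\gamma_n}(K,\cdot)+\log\gamma_n(K)$, which requires the hypothesis $\gamma_n(K_i)\geq 1/2$ to control the factor $1/\gamma_n(K_i)$, and needs the computation $\gamma_n(R_iB_n)\to 1$ from \eqref{gamma=1} before letting $R_i\to+\infty$. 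You instead isolate the single quantity $\int_{S^{n-1}}h_{K_i}^p\,dS_{p,\gamma_n}(K_i,\cdot)$, bound it above uniformly by rearranging the same Minkowski inequality (your rearrangement and the bound $\gamma_n(K_i)\log\frac{\gamma_n(K_i)}{\gamma_n(B_n)}\leq\log\frac{1}{\gamma_n(B_n)}$ are correct, and in fact only $\gamma_n(K_i)\leq 1$ is used, so this lemma does not actually need $\gamma_n(K_i)\geq 1/2$), and bound it below by $c_1R_i^p$, obtaining the explicit estimate $R_i\leq(C/c_1)^{1/p}$ with no limiting argument. Your parenthetical identity $\int_{S^{n-1}}h_K^p\,dS_{p,\gamma_n}(K,\cdot)=\int_{S^{n-1}}h_K\,dS_{\gamma_n,K}=n\gamma_n(K)-\frac{1}{(\sqrt{2\pi})^n}\int_K|x|^2e^{-|x|^2/2}\,dx\leq n$, which also follows from Lemma \ref{definition-Lp-Gaussian} together with the divergence theorem, is an even more elementary substitute for Lemma \ref{Conti-Min-ineq} and yields the uniform constant $C=n$; either route is sound, and the Cauchy--Schwarz argument identifying $R_iu_i\in K_i$ is a valid replacement for the paper's use of the radial function.
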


\begin{proof}
For $K\in\mathcal{K}^n_o$, the function $\Phi_p$ is defined by
\begin{align}\label{Phi}
\Phi_p(K)=-\frac{1}{p\gamma_n(K)}\int_{S^{n-1}}h^p_K(u)dS_{p,\gamma_n}(K,u)+\log\gamma_n(K).
\end{align}
From Lemma \ref{Conti-Min-ineq} and $\gamma_n(K_i)\geq1/2$, we have
\begin{align*}
\Phi_p(K_i)&=-\frac{1}{p\gamma_n(K_i)}\int_{S^{n-1}}
h^p_{K_i}(u)dS_{p,\gamma_n}(K_i,u)+\log\gamma_n(K_i)\\
&\geq-\frac{1}{p\gamma_n(K_i)}\int_{S^{n-1}}h^p_{B_n}(u)dS_{p,\gamma_n}(K_i,u)+\log\gamma_n(B_n)\\
&=-\frac{|S_{p,\gamma_n}(K_i,\cdot)|}{p\gamma_n(K_i)}+\log\gamma_n(B_n)\\
&\geq-\frac{2}{p}|S_{p,\gamma_n}(K_i,\cdot)|+\log\gamma_n(B_n).
\end{align*}

Since the sequence $\{S_{p,\gamma_n}(K_i,\cdot)\}$ converges to $S_{p,\gamma_n}(K_0,\cdot)$ weakly,
then, $\{|S_{p,\gamma_n}(K_i,\cdot)\}$ converges to $|S_{p,\gamma_n}(K_0,\cdot)$. Together with the fact that $|S_{\gamma_n,K_0}|$ is finite, there exists a big enough constant $M_1>0$ satisfying
\begin{align*}
|S_{p,\gamma_n}(K_i,\cdot)|\leq \frac{p}{2}M_1,
\end{align*}
for all $i$.
Hence, for $i=1,2,\cdots$,
\begin{align}\label{Conti-bound-1}
\Phi_p(K_i)\geq-M_1+\log\gamma_n(B_n).
\end{align}

Since $K_i\in\mathcal{K}^n_o$, then $\rho_{K_i}$ is continuous on $S^{n-1}$. By the fact that
$S^{n-1}$ is compact, we obtain there exists $u_i\in S^{n-1}$ such that
\begin{align*}
\rho_{K_i}(u_i)=\max\{\rho_{K_i}(u): u\in S^{n-1}\}.
\end{align*}
Let $R_i=\rho_{K_i}(u_i)$.
then, $R_iu_i\in K_i$ and $K_i\subseteq R_iB_n$.
By the definition of support function,
\begin{align*}
    h_{K_i}(v)\geq R_i(u_i\cdot v)_+,
\end{align*}
for all $v\in S^{n-1}$.
Combining $p\geq1$, $K_i\subseteq R_iB_n$ with Lemma \ref{Conti-cos-bound}, we have
\begin{align*}
\Phi_p(K_i)&=-\frac{1}{p\gamma_n(K_i)}\int_{S^{n-1}}
h^p_{K_i}(v)dS_{p,\gamma_n}(K_i,v)+\log\gamma_n(K_i)\\
&\leq-\frac{R_i^p}{p\gamma_n(K_i)}\int_{S^{n-1}}(u_i\cdot v)_+^pdS_{p,\gamma_n}(K_i,v)+\log\gamma_n(K_i)\\
&\leq-\frac{R_i^p}{p\gamma_n(R_iB_n)}\int_{S^{n-1}}(u_i\cdot v)_+^pdS_{p,\gamma_n}(K_i,v)+\log\gamma_n(R_iB_n)\\
&\leq-\frac{c_1R_i^p}{p\gamma_n(R_iB_n)}+\log\gamma_n(R_iB_n).
\end{align*}

Suppose that $\{R_i\}$ is not a bounded sequence. Without loss of generality,
we may assume that $\lim_{i\rightarrow+\infty}R_i=+\infty$.
By polar coordinates,
\begin{align*}
\gamma_n(R_iB_n)=\frac{1}{(\sqrt{2\pi})^n}\int_{R_iB_n}e^{-\frac{|x|^2}{2}}dx
=\frac{n\omega_n}{(\sqrt{2\pi})^n}\int_0^{R_i}t^{n-1}e^{-\frac{t^2}{2}}dt.
\end{align*}
By the fact that $n\omega_n=2\pi^\frac{n}{2}\big/\Gamma(\frac{n}{2})$ where $\Gamma(s)=\int_0^{+\infty}t^{s-1}e^{-t}dt$ is the Gamma function for $s>0$,
\begin{align}\label{gamma=1}
\lim_{i\rightarrow+\infty}\gamma_n(R_iB_n)
&=\frac{n\omega_n}{(\sqrt{2\pi})^n}\int_0^{+\infty}t^{n-1}e^{-\frac{t^2}{2}}dt\nonumber\\
&=\frac{n\omega_n 2^{\frac{n}{2}-1}}{(\sqrt{2\pi})^n}
\int_0^{+\infty}\left(\frac{t^2}{2}\right)^{\frac{n}{2}-1}e^{-\frac{t^2}{2}}tdt\nonumber\\
&=\frac{n\omega_n 2^{\frac{n}{2}-1}}{(\sqrt{2\pi})^n}
\int_0^{+\infty}t^{\frac{n}{2}-1}e^{-t}dt\nonumber\\
&=\frac{2\pi^\frac{n}{2} 2^{\frac{n}{2}-1}}{\Gamma(\frac{n}{2})(\sqrt{2\pi})^n}\Gamma(\frac{n}{2})=1.
\end{align}
Together with $\lim_{i\rightarrow+\infty}R_i=+\infty$ and $p\geq 1$,
we have
\begin{align*}
\Phi_p(K_i)\leq-\frac{c_1R_i^p}{\gamma_n(R_iB_n)}+\log\gamma_n(R_iB_n)
\rightarrow -\infty,
\end{align*}
as $i\rightarrow+\infty$.
This is a contradiction to \eqref{Conti-bound-1}.
Therefore, $\{R_i\}$ is bounded, that is,
the sequence $\{K_i\}$ is bounded.
\end{proof}

\begin{lem}\label{interior}
Suppose $K$ is a compact convex set in $\mathbb{R}^n$. If $\gamma_n(K)>0$, then $K$ is a convex body in $\mathbb{R}^n$, that is, $K\in \mathcal{K}^n$.
\end{lem}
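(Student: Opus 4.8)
The plan is to argue by contradiction, exploiting the very definition of a convex body as a compact convex set with \emph{non-empty interior}. So suppose, for contradiction, that $\mathrm{int}\,K=\emptyset$. A classical structural fact about convex sets states that the interior of a convex set in $\mathbb{R}^n$ is non-empty if and only if its affine hull is all of $\mathbb{R}^n$; equivalently, a convex set with empty interior is contained in some affine hyperplane (see \cite{Schneider}). Hence there is an affine subspace $H$ of dimension at most $n-1$ with $K\subseteq H$.

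Next I would invoke the absolute continuity of the Gaussian measure with respect to Lebesgue measure. Since an affine subspace of dimension at most $n-1$ has $n$-dimensional Lebesgue measure zero, and $\gamma_n$ has the everywhere-finite density $\frac{1}{(\sqrt{2\pi})^n}e^{-|x|^2/2}$ against Lebesgue measure, we get $\gamma_n(K)\leq\gamma_n(H)=0$. This contradicts the hypothesis $\gamma_n(K)>0$. Therefore $\mathrm{int}\,K\neq\emptyset$, and together with the assumed compactness and convexity this shows $K\in\mathcal{K}^n$.

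I do not expect any genuine obstacle here: the lemma is essentially immediate once one combines the two ingredients above. The only point meriting a sentence rather than a computation is the passage ``empty interior $\Rightarrow$ contained in a hyperplane'', which is a standard property of convex sets (a non-empty convex set has non-empty relative interior in its affine hull, and when that affine hull is $n$-dimensional the relative interior is the interior); I would simply cite this and move on.
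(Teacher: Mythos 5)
Your proof is correct and in substance the same as the paper's: the paper bounds $\gamma_n(K)\leq(\sqrt{2\pi})^{-n}V_n(K)$ using $e^{-|x|^2/2}\leq1$, concludes $V_n(K)>0$, and then invokes the fact that a compact convex set of positive Lebesgue measure has nonempty interior. You merely run this in contrapositive form (empty interior $\Rightarrow$ contained in a hyperplane $\Rightarrow$ Lebesgue-null $\Rightarrow$ Gaussian-null), which is the standard proof of the convexity fact the paper uses implicitly, so there is no essential difference.
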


\begin{proof}
By the definition of the Gaussian volume $\gamma_n$, we have
\begin{align*}
\gamma_n(K)=\frac{1}{(\sqrt{2\pi})^n}\int_Ke^{-\frac{|x|^2}{2}}dx
\leq\frac{1}{(\sqrt{2\pi})^n}\int_Kdx=\frac{1}{(\sqrt{2\pi})^n}V_n(K).
\end{align*}
Together with $\gamma_n(K)>0$,
\begin{align*}
V_n(K)\geq(\sqrt{2\pi})^n\gamma_n(K)>0.
\end{align*}
Therefore, compact convex set $K$ has nonempty interior in $\mathbb{R}^n$, that is, $K$ is a convex body in $\mathbb{R}^n$.
\end{proof}

\begin{lem}\label{Conti-polar-bound}
Suppose $K_i\in \mathcal{K}^n_o$ with  $\gamma_n(K_i)\geq1/2$ for $i=1,2,\cdots$.
If the sequence $\{K_i\}$ converges to compact convex set $L$ in the Hausdorff metric, then $L\in \mathcal{K}^n_o$.
\end{lem}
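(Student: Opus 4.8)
The plan is to split the statement into two parts: first showing that $L$ has nonempty interior (so that $L\in\mathcal{K}^n$), and then showing that the origin lies in $\text{int}\,L$.

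First I would establish the auxiliary bound $\gamma_n(L)\geq 1/2$. Since $K_i\to L$ in the Hausdorff metric, for every $\delta>0$ there is an index $N$ with $h_{K_i}\leq h_L+\delta=h_{L+\delta B_n}$ for all $i\geq N$, hence $K_i\subseteq L+\delta B_n$; note only this one-sided inclusion is needed, and it is valid even though $L$ is not yet known to have interior. Therefore $\gamma_n(K_i)\leq\gamma_n(L+\delta B_n)$ for $i\geq N$, so $\limsup_i\gamma_n(K_i)\leq\gamma_n(L+\delta B_n)$; letting $\delta\to0^+$ and using $\bigcap_{\delta>0}(L+\delta B_n)=L$ (as $L$ is closed) together with continuity of the finite measure $\gamma_n$ from above gives $\gamma_n(L)\geq\limsup_i\gamma_n(K_i)\geq 1/2>0$. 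By Lemma \ref{interior}, $L$ is a convex body, i.e. $L\in\mathcal{K}^n$.

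Next I would rule out the possibility that the origin lies on $\partial L$. Since $o\in\text{int}\,K_i\subseteq K_i$ for every $i$ and $L$ is closed, Hausdorff convergence forces $o\in L$. Suppose, toward a contradiction, that $o\notin\text{int}\,L$; then $o$ is a boundary point of the convex body $L$, so there is a supporting hyperplane of $L$ at $o$, i.e. a vector $u_0\in S^{n-1}$ with $x\cdot u_0\leq h_L(u_0)=o\cdot u_0=0$ for all $x\in L$. Thus $L\subseteq H:=\{x\in\mathbb{R}^n:\ x\cdot u_0\leq 0\}$, and by the symmetry of the Gaussian density $\gamma_n(H)=1/2$. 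Since $L$ is compact it is bounded, so $H\setminus L$ contains a small Euclidean ball (for instance around $-Ru_0$ with $R$ large), which has strictly positive Gaussian measure; hence $\gamma_n(L)<\gamma_n(H)=1/2$, contradicting the bound proved above. Therefore $o\in\text{int}\,L$, and so $L\in\mathcal{K}^n_o$.

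I expect the crux — and the place where the hypothesis $\gamma_n(K_i)\geq 1/2$ is genuinely needed — to be precisely the second part: without a lower volume bound a sequence in $\mathcal{K}^n_o$ can easily converge to a convex body with the origin on its boundary (even to a single point), so some quantitative control is indispensable, and the Gaussian-volume lower bound is exactly what prevents the body from degenerating into a half-space. The only mildly delicate technical point is that $\gamma_n(L)\geq 1/2$ must be extracted before one knows $L$ has interior, which is why I pass through the inclusions $K_i\subseteq L+\delta B_n$ and the monotone limit $\gamma_n(L+\delta B_n)\downarrow\gamma_n(L)$ rather than invoking continuity of $\gamma_n$ on $\mathcal{K}^n_o$ directly.
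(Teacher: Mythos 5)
Your proof is correct and follows essentially the same strategy as the paper: pass the bound $\gamma_n(\cdot)\geq 1/2$ to the limit, invoke Lemma \ref{interior} to see $L\in\mathcal{K}^n$, and rule out $o\in\partial L$ by a supporting hyperplane together with the fact that a closed half-space through the origin has Gaussian measure exactly $\tfrac12$. The only (harmless) difference is where the contradiction is drawn: you contradict $\gamma_n(L)\geq 1/2$ directly, using compactness of $L$ to find a ball of positive Gaussian measure in $H\setminus L$, whereas the paper traps the bodies $K_i$ in the slab $B_n(R)\cap\{x\in\mathbb{R}^n:\,x\cdot u_0\leq\varepsilon\}$ and contradicts the hypothesis $\gamma_n(K_i)\geq 1/2$ for large $i$.
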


\begin{proof}
By the continuity of Gaussian volume,
\begin{align*}
 \gamma_n(L)=\lim_{i\rightarrow+\infty}\gamma_n(K_i)\geq1/2.
\end{align*}
Together with Lemma \ref{interior}, we have $L$ is a convex body in $\mathbb{R}^n$.

Assume that $o\in \partial L$. Then, there exists a $u_0\in S^{n-1}$ such that $h_L(u_0)=0$.
Since $\{K_i\}$ converges to $L$, we have
\begin{align*}
\lim_{i\rightarrow+\infty}h_{K_i}(u_0)=h_L(u_0)=0.
\end{align*}
For arbitrary $\varepsilon>0$, there exists a big enough integer $N_\varepsilon>1$ so that
$h_{K_i}(u_0)<\varepsilon$ for all $i>N_\varepsilon$.
Thus,
$$K_i\subseteq \{x\in\mathbb{R}^n: x\cdot u_0\leq \varepsilon \}$$
for all $i>N_\varepsilon$.
By the fact that $\{K_i\}$ converges to $L$ again, there exists a constant $R>0$ such that $K_i\subseteq B_n(R)$ where $B_n(R)$ is a ball with radius $R$ in $\mathbb{R}^n$.
Hence, for all $i>N_\varepsilon$,
\begin{align*}
K_i\subseteq B_n(R)\cap\{x\in\mathbb{R}^n: x\cdot u_0\leq \varepsilon \}.
\end{align*}

By the following result:
\begin{align*}
\gamma_n(\mathbb{R}^n)=\frac{1}{(\sqrt{2\pi})^n}\int_{\mathbb{R}^n}e^{-\frac{|x|^2}{2}}dx=1,
\end{align*}
we obtain, for halfspace $H^-(u_0)=\{x\in\mathbb{R}^n: x\cdot u_0\leq 0 \}$,
\begin{align*}
\gamma_n(H^-(u_0))=\frac{1}{(\sqrt{2\pi})^n}\int_{H^-(u_0)}e^{-\frac{|x|^2}{2}}dx
=\frac{1}{2}\gamma_n(\mathbb{R}^n)=\frac{1}{2}.
\end{align*}
Together with $\gamma_n(H^-(u_0))=\gamma_n(H^-(u_0)\cap B_n(R))+\gamma_n(H^-(u_0)\backslash B_n(R))$, we have
\begin{align*}
\gamma_n(H^-(u_0)\cap B_n(R))<\frac{1}{2}.
\end{align*}
Hence, for $\varepsilon$ small enough,
\begin{align*}
\gamma_n(K_i)\leq \gamma_n\big( B_n(R)\cap\{x\in\mathbb{R}^n: x\cdot u_0\leq \varepsilon \}\big)<\frac{1}{2},
\end{align*}
for all $i>N_\varepsilon$.
This is a contradiction to the condition $\gamma_n(K_i)\geq1/2$ for $i=1,2,\cdots$.
Therefore, $o$ is an interior point of $L$, that is, $L\in \mathcal{K}^n_o$.
\end{proof}

\begin{remark}\label{rem-1}
From the proof of Lemma \ref{Conti-polar-bound}, we obtain that  $\gamma_n(K)\geq \frac{1}{2}$ for $K\in \mathcal{K}^n_o$ means that the origin $o$ can not be close sufficiently to the boundary $\partial K$.

If $L\in \mathcal{K}^n$ with $o\in \partial L$, then we have $\gamma_n(L)<\frac{1}{2}$.
Hence, the condition ``$\gamma_n(K_i)\geq \frac{1}{2}$ for $K_i\in \mathcal{K}^n_o$" is very necessary for Lemma \ref{Conti-polar-bound}.

\end{remark}

The weak convergence of Gaussian surface area measure is obtained in \cite{HuangXZ}:

\begin{lem}[\cite{HuangXZ}]\label{weak-conve}
Let $K_i\in \mathcal{K}^n_o$ for $i=1,2,\cdots$ such that $\{K_i\}$ converges to $K_0\in \mathcal{K}^n_o$ in the Hausdorff metric, then
$\{S_{\gamma_n,K_i}\}$ converges to $S_{\gamma_n,K_0}$ weakly.
\end{lem}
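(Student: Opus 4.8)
The plan is to realize the Gaussian surface area measure as the push-forward of a weighted boundary measure under the Gauss map, to lift the whole picture to the normal bundle of $K$, and there to invoke the classical weak continuity of the support (generalized curvature) measures; the Gaussian weight then enters only through a fixed bounded continuous function. Write $\mathcal H^{n-1}$ for $(n-1)$-dimensional Hausdorff measure and $\nu_K\colon\partial'K\to S^{n-1}$ for the Gauss map, which sends $x\in\partial'K$ to its unique outer unit normal. The starting point is the representation of $S_{\gamma_n,K}$ that follows from \eqref{Lp-Gaussian-Measure} (equivalently, from the Monge-Amp\`ere description in the case $p=1$): for $K\in\mathcal K^n_o$, $S_{\gamma_n,K}$ is the image under $\nu_K$ of the measure $\tfrac{1}{(\sqrt{2\pi})^n}e^{-|x|^2/2}\,d\mathcal H^{n-1}(x)$ carried by $\partial'K$, so that
\begin{align*}
\int_{S^{n-1}}g(u)\,dS_{\gamma_n,K}(u)=\frac{1}{(\sqrt{2\pi})^n}\int_{\partial'K}g\big(\nu_K(x)\big)\,e^{-|x|^2/2}\,d\mathcal H^{n-1}(x)
\end{align*}
for every $g\in C(S^{n-1})$. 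Since $K_i\to K_0$ in the Hausdorff metric there is a fixed $R>0$ with $K_i\subseteq RB_n$ for all $i\ge0$, and monotonicity of surface area for convex bodies yields $S_{\gamma_n,K_i}(S^{n-1})\le\tfrac{1}{(\sqrt{2\pi})^n}\mathcal H^{n-1}(\partial K_i)\le\tfrac{n\omega_n R^{n-1}}{(\sqrt{2\pi})^n}$, a bound independent of $i$. Hence it is enough to prove $\int_{S^{n-1}}g\,dS_{\gamma_n,K_i}\to\int_{S^{n-1}}g\,dS_{\gamma_n,K_0}$ for each fixed $g\in C(S^{n-1})$.

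Next I would pass to the normal bundle. Let $\Sigma_K$ be the Borel measure on $\mathbb R^n\times S^{n-1}$ obtained by pushing $\mathcal H^{n-1}$ on $\partial'K$ forward under the map $x\mapsto(x,\nu_K(x))$; up to a constant depending only on $n$ this is the $(n-1)$st support measure of $K$, and it is concentrated on the normal bundle $\{(x,u)\in\partial K\times S^{n-1}:x\cdot u=h_K(u)\}$. With $\phi(x,u):=g(u)\,e^{-|x|^2/2}$, a bounded continuous function on $\mathbb R^n\times S^{n-1}$, the identity above reads
\begin{align*}
\int_{S^{n-1}}g\,dS_{\gamma_n,K_i}=\frac{1}{(\sqrt{2\pi})^n}\int_{\mathbb R^n\times S^{n-1}}\phi\,d\Sigma_{K_i}.
\end{align*}
I would then invoke the classical fact that $K\mapsto\Sigma_K$ is weakly continuous on $\mathcal K^n$ (weak continuity of the support/curvature measures; see \cite{Schneider}). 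Since each $\Sigma_{K_i}$ is supported in the fixed compact set $RB_n\times S^{n-1}$, the weak convergence $\Sigma_{K_i}\to\Sigma_{K_0}$ gives $\int\phi\,d\Sigma_{K_i}\to\int\phi\,d\Sigma_{K_0}$, which is precisely $\int_{S^{n-1}}g\,dS_{\gamma_n,K_i}\to\int_{S^{n-1}}g\,dS_{\gamma_n,K_0}$. This would complete the proof.

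The hard part is exactly the ingredient the plan borrows from \cite{Schneider}. The integrand $g(\nu_{K_i}(\cdot))\,e^{-|\cdot|^2/2}$ depends on $i$ through the Gauss map, so neither the weak continuity of the classical surface area measures $S_{K_i}$ on $S^{n-1}$ nor that of the boundary measures on $\mathbb R^n$ is by itself sufficient; one genuinely needs the coupled convergence on the normal bundle, and a self-contained proof would have to establish it (for instance via the local Steiner formula together with approximation by polytopes). It is also worth recording why two tempting shortcuts fail. One cannot expect $S_{\gamma_n,K}$ to be a continuous reweighting of $S_K$ on $S^{n-1}$: for a polytope $S_K$ is purely atomic while the reverse Gauss map is set-valued at each facet normal, so the Gaussian weight is genuinely an average of $e^{-|x|^2/2}$ over a face, and this is why the lift to the normal bundle is unavoidable. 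And although Ehrhard's inequality makes $t\mapsto\Phi^{-1}\!\big(\gamma_n(K_i+tL)\big)$ concave on $[0,\infty)$ for every $i$ (with $\Phi$ the Gaussian distribution function), pointwise convergence of concave functions, even to a continuous limit, does not force their one-sided derivatives at the endpoint $t=0$ to converge, so the claim cannot be reduced to the mere continuity of $\gamma_n$ with respect to the Hausdorff metric.
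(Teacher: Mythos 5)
Your argument is correct, but note that the paper itself offers no proof of this lemma: it is quoted verbatim from \cite{HuangXZ}, so there is nothing internal to compare against. Your route --- rewriting $\int_{S^{n-1}}g\,dS_{\gamma_n,K}$ as $\tfrac{1}{(\sqrt{2\pi})^n}\int_{\partial' K}g(\nu_K(x))e^{-|x|^2/2}\,d\mathcal H^{n-1}(x)$, lifting to the normal bundle, identifying the lifted measure (up to a dimensional constant) with the top-order support measure $\Theta_{n-1}(K,\cdot)$, and then invoking its weak continuity from \cite{Schneider} together with the uniform confinement of all $\partial K_i$ to a fixed ball --- is sound, and the two ingredients you borrow (the identification of $\Theta_{n-1}$ with the lift of $\mathcal H^{n-1}|_{\partial' K}$ under $x\mapsto(x,\nu_K(x))$, and weak continuity of the support measures) are genuinely available in Schneider's book, so they are legitimate citations rather than gaps; your observation that $S_{\gamma_n,K}$ is not a fixed continuous reweighting of $S_K$ (polytopes show the density is a facet average of $e^{-|x|^2/2}$) correctly explains why the coupled position--normal convergence is needed. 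The argument in \cite{HuangXZ}, and the standard one in this literature, is different in technique though not in spirit: one parametrizes $\partial K$ over $S^{n-1}$ by the radial map $u\mapsto\rho_K(u)u$, writes the boundary integral as an integral over the sphere with Jacobian $\rho_K^n/h_K\circ\alpha_K$ involving the radial Gauss map $\alpha_K$, and then uses uniform convergence of $\rho_{K_i},h_{K_i}$ together with the almost-everywhere convergence of $\alpha_{K_i}$ to $\alpha_{K_0}$ and dominated convergence. That route stays within the elementary toolbox of the (dual) Brunn--Minkowski literature and needs $K_0\in\mathcal K^n_o$ for the radial parametrization, while yours outsources the delicate convergence to the classical weak continuity of support measures and would work for arbitrary convex bodies, at the price of invoking a heavier piece of machinery.
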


By the variational formula \eqref{Lp-Gaussian-Measure} of the Gaussian volume $\gamma_n$  for $L_p$ Minkowski combination,
the integral expression of $L_p$ Gaussian surface area measure was obtained in \cite{HuangXZ,Liu,LvWang}.

\begin{lem}[\cite{HuangXZ,Liu,LvWang}]\label{definition-Lp-Gaussian}
Suppose $p\in \mathbb{R}$ and $K\in \mathcal{K}^n_o$. For each Borel set $\eta\subseteq S^{n-1}$, $L_p$ Gaussian surface area $S_{p,\gamma_n}(K,\cdot)$ of $K$ is defined by
\begin{align*}
S_{p,\gamma_n}(K,\eta)
&=\frac{1}{(\sqrt{2\pi})^n}
\int_{\nu^{-1}_K(\eta)}(x\cdot\nu_K(x))^{1-p}e^{-\frac{|x|^2}{2}}d\mathcal{H}^{n-1}(x)\\
&=\int_{\eta}h_K^{1-p}(u)dS_{\gamma_n,K}(u).
\end{align*}
Here, $\nu_K: \partial' K \rightarrow S^{n-1}$ is the Gauss map of $K$
and $\mathcal{H}^{n-1}$ is an ($n-1$)-dimensional Hausdorff measure.
\end{lem}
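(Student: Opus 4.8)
The plan is to derive both displayed expressions from the defining variational formula \eqref{Lp-Gaussian-Measure}, using the $p=1$ Gaussian surface area measure $S_{\gamma_n,K}$ as the bridge. The starting point is that the support function of the $L_p$ combination is $h_{K+_pt\cdot L}=(h_K^p+th_L^p)^{1/p}$, so the family $h_t=(h_K^p+th_L^p)^{1/p}$ is differentiable in $t$ near $0$ with
\[
\frac{\partial h_t}{\partial t}\Big|_{t=0}=\frac{1}{p}h_K^{1-p}h_L^p .
\]
Thus the whole problem reduces to a general variational formula: for any $C^1$ family of strictly positive functions $f_t\in C^+(S^{n-1})$ with $\partial_t f_t|_{t=0}=g$ and $K=[f_0]$, one should have
\[
\frac{d}{dt}\gamma_n([f_t])\Big|_{t=0}=\int_{S^{n-1}}g(u)\,dS_{\gamma_n,K}(u).
\]

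To establish this general formula I would write the Gaussian volume as a boundary flux and transport it to the sphere. Differentiating $\gamma_n([f_t])=\frac{1}{(\sqrt{2\pi})^n}\int_{[f_t]}e^{-|x|^2/2}\,dx$ in $t$, the rate of change equals the Gaussian weight integrated over $\partial K$ against the normal velocity of the boundary; since the support hyperplane with outer normal $u$ moves outward at speed $\partial_t f_t|_{t=0}(u)=g(u)$, this yields
\[
\frac{d}{dt}\gamma_n([f_t])\Big|_{t=0}=\frac{1}{(\sqrt{2\pi})^n}\int_{\partial' K}g(\nu_K(x))e^{-|x|^2/2}\,d\mathcal{H}^{n-1}(x).
\]
Pushing the measure $\frac{1}{(\sqrt{2\pi})^n}e^{-|x|^2/2}d\mathcal{H}^{n-1}$ forward under the Gauss map $\nu_K$ is exactly the Gaussian surface area measure $S_{\gamma_n,K}$ of \cite{HuangXZ}, which turns the right-hand side into $\int_{S^{n-1}}g\,dS_{\gamma_n,K}$ and proves the general formula.

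With the general formula in hand, I would apply it to the Wulff-shape family $[(h_K^p+tf^p)^{1/p}]$ for an arbitrary $f\in C^+(S^{n-1})$, where $\partial_t|_{t=0}=\frac1p h_K^{1-p}f^p$; comparing with \eqref{Lp-Gaussian-Measure} (which, being derived the same way, extends from support functions $h_L$ to all $f\in C^+(S^{n-1})$) gives
\[
\int_{S^{n-1}}f^p\,dS_{p,\gamma_n}(K,\cdot)=\int_{S^{n-1}}f^p h_K^{1-p}\,dS_{\gamma_n,K}
\]
for every $f\in C^+(S^{n-1})$. Since $f^p$ ranges over all of $C^+(S^{n-1})$, the two measures coincide, so $dS_{p,\gamma_n}(K,\cdot)=h_K^{1-p}\,dS_{\gamma_n,K}$, i.e. $S_{p,\gamma_n}(K,\eta)=\int_\eta h_K^{1-p}\,dS_{\gamma_n,K}$, which is the second displayed identity. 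The first expression then follows by undoing the Gauss-map pushforward: on $\partial' K$ one has $h_K(\nu_K(x))=x\cdot\nu_K(x)$, so replacing $h_K^{1-p}(u)$ by $(x\cdot\nu_K(x))^{1-p}$ and $S_{\gamma_n,K}$ by its boundary definition recovers $\frac{1}{(\sqrt{2\pi})^n}\int_{\nu_K^{-1}(\eta)}(x\cdot\nu_K(x))^{1-p}e^{-|x|^2/2}\,d\mathcal{H}^{n-1}(x)$.

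The main obstacle is the rigorous justification of differentiating the Gaussian volume under the Wulff-shape deformation — the general variational formula — for a measure that is neither homogeneous nor translation invariant. I would handle this by noting that for $t$ near $0$ the bodies $[f_t]$ stay in a fixed bounded region, the density $e^{-|x|^2/2}$ is smooth and bounded there, and $f_t\to f_0$ in a sense strong enough to control the boundary; dominated convergence together with the standard Aleksandrov-type argument for Wulff shapes then legitimizes the interchange of limit and integral and the boundary-flux computation. A secondary technical point is the almost-everywhere differentiability of $\partial K$ and the validity of the Gauss-map change of variables, which holds on the full-measure subset $\partial' K$ of points having a unique outer normal.
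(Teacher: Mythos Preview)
The paper does not give its own proof of this lemma: it is quoted verbatim from \cite{HuangXZ,Liu,LvWang} and only the surrounding sentence ``By the variational formula \eqref{Lp-Gaussian-Measure} \dots\ the integral expression \dots\ was obtained in \cite{HuangXZ,Liu,LvWang}'' appears. So there is no in-paper argument to compare against.

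Your outline is essentially the derivation carried out in those references: compute $\partial_t|_{t=0}$ of the generating function $(h_K^p+tf^p)^{1/p}$, feed it into the Aleksandrov-type variational formula for the Gaussian volume of Wulff shapes to obtain $\int f^p h_K^{1-p}\,dS_{\gamma_n,K}$, match against the defining identity \eqref{Lp-Gaussian-Measure} to conclude $dS_{p,\gamma_n}(K,\cdot)=h_K^{1-p}\,dS_{\gamma_n,K}$, and then unwind the Gauss-map pushforward using $h_K(\nu_K(x))=x\cdot\nu_K(x)$ on $\partial'K$. One small correction: your opening line ``the support function of the $L_p$ combination is $h_{K+_pt\cdot L}=(h_K^p+th_L^p)^{1/p}$'' is only true for $p\ge 1$; for general $p$ the combination is \emph{defined} as the Wulff shape $[(h_K^p+th_L^p)^{1/p}]$ and its support function can be strictly smaller. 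This does not affect your argument, since you (correctly) run the variational computation on the Wulff-shape family and the Aleksandrov lemma uses the generating function rather than the support function, but the sentence as written is inaccurate for $p<1$.
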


Since $\{K_i\}$ converges to $K_0$ in the Hausdorff metric for $K_i\in \mathcal{K}^n_o$$(i=0,1,\cdots)$, then $\{h_{K_i}\}$ converges to $h_{K_0}$
uniformly on $S^{n-1}$. Together with the Lemma \ref{weak-conve} and Lemma \ref{definition-Lp-Gaussian}, we obtain the weak convergence of $L_p$ Gaussian surface area measures as follows:

\begin{pro}\label{weak-conve-p}
Suppose $p\in \mathbb{R}$ and $K_i\in \mathcal{K}^n_o$ for $i=0,1,\cdots$.  If $\{K_i\}$ converges to $K_0$ in the Hausdorff metric, then
$\{S_{p,\gamma_n}(K_i,\cdot)\}$ converges to $S_{p,\gamma_n}(K_0,\cdot)$ weakly.
\end{pro}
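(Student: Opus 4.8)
The plan is to reduce the assertion to the weak convergence of the Gaussian surface area measures furnished by Lemma \ref{weak-conve}, exploiting the factorization $dS_{p,\gamma_n}(K,\cdot)=h_K^{1-p}\,dS_{\gamma_n,K}$ recorded in Lemma \ref{definition-Lp-Gaussian}. Fix an arbitrary $g\in C(S^{n-1})$. By Lemma \ref{definition-Lp-Gaussian},
\[
\int_{S^{n-1}}g\,dS_{p,\gamma_n}(K_i,\cdot)=\int_{S^{n-1}}g\,h_{K_i}^{1-p}\,dS_{\gamma_n,K_i},\qquad i=0,1,2,\dots,
\]
so it suffices to prove that $\int_{S^{n-1}}g\,h_{K_i}^{1-p}\,dS_{\gamma_n,K_i}\to\int_{S^{n-1}}g\,h_{K_0}^{1-p}\,dS_{\gamma_n,K_0}$.

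First I would check that the integrands converge uniformly: $g\,h_{K_i}^{1-p}\to g\,h_{K_0}^{1-p}$ in $C(S^{n-1})$. Since $K_i\to K_0$ in the Hausdorff metric, $h_{K_i}\to h_{K_0}$ uniformly on $S^{n-1}$; and since $K_0\in\mathcal{K}^n_o$, the support function $h_{K_0}$ is bounded below by a positive constant, whence for all sufficiently large $i$ the functions $h_{K_i}$ take values in a fixed compact subinterval $[a,b]\subset(0,\infty)$. As $t\mapsto t^{1-p}$ is uniformly continuous on $[a,b]$, we get $h_{K_i}^{1-p}\to h_{K_0}^{1-p}$ uniformly, and multiplying by the bounded continuous function $g$ preserves uniform convergence.

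Next comes the routine estimate combining a uniformly convergent integrand with weakly convergent measures. Write
\begin{align*}
&\int_{S^{n-1}}g\,h_{K_i}^{1-p}\,dS_{\gamma_n,K_i}-\int_{S^{n-1}}g\,h_{K_0}^{1-p}\,dS_{\gamma_n,K_0}\\
&\quad=\int_{S^{n-1}}\big(g\,h_{K_i}^{1-p}-g\,h_{K_0}^{1-p}\big)\,dS_{\gamma_n,K_i}+\Big(\int_{S^{n-1}}g\,h_{K_0}^{1-p}\,dS_{\gamma_n,K_i}-\int_{S^{n-1}}g\,h_{K_0}^{1-p}\,dS_{\gamma_n,K_0}\Big).
\end{align*}
The second bracket tends to $0$ by Lemma \ref{weak-conve}, because $g\,h_{K_0}^{1-p}\in C(S^{n-1})$. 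For the first term, testing the weak convergence in Lemma \ref{weak-conve} against the constant function $1$ gives $|S_{\gamma_n,K_i}|=S_{\gamma_n,K_i}(S^{n-1})\to S_{\gamma_n,K_0}(S^{n-1})<\infty$, so $M:=\sup_i|S_{\gamma_n,K_i}|<\infty$; hence the first term is bounded in absolute value by $M\,\|g\,h_{K_i}^{1-p}-g\,h_{K_0}^{1-p}\|_{\infty}$, which tends to $0$ by the previous paragraph. Thus the whole difference tends to $0$, and since $g\in C(S^{n-1})$ was arbitrary, $\{S_{p,\gamma_n}(K_i,\cdot)\}$ converges to $S_{p,\gamma_n}(K_0,\cdot)$ weakly.

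The only delicate point — which I would flag as the main, if modest, obstacle — occurs when $p>1$, where $h^{1-p}$ has a singularity at $h=0$: one must use that $K_0$ contains the origin in its interior (hence $h_{K_0}\ge a>0$) together with the uniform convergence of support functions to keep all the powers $h_{K_i}^{1-p}$ inside a fixed compact subinterval of $(0,\infty)$. When $p\le 1$ the exponent $1-p\ge 0$ and this concern evaporates. Everything else reduces to the bookkeeping above.
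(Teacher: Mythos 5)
Your proposal is correct and follows essentially the same route as the paper, which likewise deduces the result from the factorization $dS_{p,\gamma_n}(K,\cdot)=h_K^{1-p}\,dS_{\gamma_n,K}$ in Lemma \ref{definition-Lp-Gaussian}, the uniform convergence $h_{K_i}\to h_{K_0}$ on $S^{n-1}$, and the weak convergence of Gaussian surface area measures in Lemma \ref{weak-conve}; you have merely written out the standard uniform-integrand-versus-weak-measure estimate that the paper leaves implicit.
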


The uniqueness of the solution to the $L_p$ Gaussian Minkowski problem is obtained in \cite{HuangXZ,Liu}:

\begin{lem}[\cite{HuangXZ,Liu}]\label{uniqueness}
Let $p\geq1$ and $K,L\in\mathcal{K}^n_o$ with $\gamma_n(K),\gamma_n(L)\geq1/2$. If
\begin{align*}
S_{p,\gamma_n}(K,\cdot)=S_{p,\gamma_n}(L,\cdot),
\end{align*}
then, $K=L$.
\end{lem}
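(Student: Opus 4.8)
The plan is to deduce uniqueness from the Gaussian Minkowski inequality (Lemma \ref{Conti-Min-ineq}) together with its equality characterization, using the hypothesis to make the two ``mixed'' integrals cancel. Write $\mu = S_{p,\gamma_n}(K,\cdot) = S_{p,\gamma_n}(L,\cdot)$ and set $a=\gamma_n(K)$, $b=\gamma_n(L)$, both $\geq 1/2$. Applying Lemma \ref{Conti-Min-ineq} first to the ordered pair $(K,L)$ and then to $(L,K)$, and using that in each case the relevant surface area measure is $\mu$, I obtain
\[
\frac1p\int_{S^{n-1}}(h_L^p-h_K^p)\,d\mu \geq a\log\frac{b}{a}, \qquad \frac1p\int_{S^{n-1}}(h_K^p-h_L^p)\,d\mu \geq b\log\frac{a}{b},
\]
with equality in either one precisely when $K=L$. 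The two left-hand sides are negatives of one another, which is the structural fact that drives everything.

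First I would dispose of the case $a=b$. Then both right-hand sides vanish, so the single quantity $\frac1p\int(h_L^p-h_K^p)\,d\mu$ is simultaneously $\geq 0$ and $\leq 0$, hence equal to $0$; this is equality in Lemma \ref{Conti-Min-ineq}, forcing $K=L$. This step is clean and uses only the equality characterization.

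The substance is the case $a\neq b$; say $b>a\geq 1/2$. Here the symmetric combination is inconclusive, since adding the two inequalities only yields $0\geq(a-b)\log(b/a)$, which is automatically true, so a genuinely new ingredient is needed, and this is exactly where $\gamma_n\geq 1/2$ must enter. My plan is a scaling reduction: by continuity and strict monotonicity of $s\mapsto\gamma_n(sK)$, choose $\lambda>1$ with $\gamma_n(\lambda K)=b$ and $\lambda_0<1$ with $\gamma_n(\lambda_0 L)=a$, and then apply Lemma \ref{Conti-Min-ineq} to the equal-volume pairs $(L,\lambda K)$ and $(K,\lambda_0 L)$ (again with measure $\mu$). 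Writing $P=\int h_K^p\,d\mu$ and $Q=\int h_L^p\,d\mu$, these give $\lambda^p P\geq Q$ and $\lambda_0^p Q\geq P$, hence $(\lambda\lambda_0)^p\geq1$; and if equality holds throughout, the equality characterization forces $L=\lambda K$, a genuine dilate. Using the explicit form of $S_{p,\gamma_n}$ in Lemma \ref{definition-Lp-Gaussian}, a dilate $L=\lambda K$ with $\lambda>1$ can share its $L_p$ Gaussian surface area measure with $K$ only if $K$ is a centered ball; the radius of the smaller ball then lies below the peak radius $\sqrt{n-p}$ of $r\mapsto r^{n-p}e^{-r^2/2}$, which for $p\geq1$ is strictly below the Gaussian median radius, so that $\gamma_n(K)<1/2$, contradicting the hypothesis.

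The main obstacle is precisely this last case: the inequalities from Lemma \ref{Conti-Min-ineq} alone cannot separate $K$ from $L$ once their Gaussian volumes differ, and the delicate point is to control the product $\lambda\lambda_0$, equivalently to force the two volumes to coincide, using only $\gamma_n\geq1/2$. The guiding phenomenon is that two concentric balls of different radii can have identical $L_p$ Gaussian surface area measures; the constraint $\gamma_n\geq1/2$ is exactly what discards the spurious smaller solution, and making this selection rigorous for general bodies, not just balls, is the crux the argument must address, ultimately by combining the scaling inequalities with the Ehrhard-type concavity of $s\mapsto\Phi^{-1}(\gamma_n(sK))$ that underlies Lemma \ref{Conti-Min-ineq}.
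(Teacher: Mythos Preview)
The paper does not prove this lemma; it is quoted from \cite{HuangXZ,Liu} and used as a black box. So there is no ``paper's own proof'' to compare against, and the question is whether your argument stands on its own.

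Your treatment of the equal-volume case $\gamma_n(K)=\gamma_n(L)$ is correct and is indeed the heart of the matter. The gap is exactly where you say it is: when $a=\gamma_n(K)\neq\gamma_n(L)=b$, the scaling manoeuvre only yields $(\lambda\lambda_0)^p\ge 1$, and nothing in your outline supplies the reverse inequality, so you never reach the equality case of Lemma~\ref{Conti-Min-ineq}. The concluding sentence (``ultimately by combining\dots Ehrhard-type concavity'') is an acknowledgment that the argument is incomplete, not a proof. The actual route in \cite{HuangXZ,Liu} bypasses the scaling entirely: Lemma~\ref{Conti-Min-ineq} is itself derived from Ehrhard's inequality, and the sharper intermediate form
\[
\frac{1}{p}\int_{S^{n-1}}\bigl(h_L^p-h_K^p\bigr)\,dS_{p,\gamma_n}(K,\cdot)\ \ge\ \varphi\bigl(\Phi^{-1}(\gamma_n(K))\bigr)\Bigl[\Phi^{-1}(\gamma_n(L))-\Phi^{-1}(\gamma_n(K))\Bigr]
\]
(where $\Phi$ is the standard normal CDF and $\varphi=\Phi'$) is what one should add in the two directions. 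Writing $\alpha=\Phi^{-1}(a)$, $\beta=\Phi^{-1}(b)$, the sum gives $0\ge(\beta-\alpha)\bigl(\varphi(\alpha)-\varphi(\beta)\bigr)$; since $a,b\ge 1/2$ forces $\alpha,\beta\ge 0$ and $\varphi$ is strictly decreasing on $[0,\infty)$, this is impossible unless $\alpha=\beta$, i.e.\ $a=b$, reducing to the case you already handled. This is precisely how the hypothesis $\gamma_n\ge 1/2$ enters, and it replaces your entire scaling discussion. The logarithmic form stated in Lemma~\ref{Conti-Min-ineq} is genuinely too weak for the unequal-volume step, as you observed: $a\log(b/a)+b\log(a/b)\le 0$ is automatic.
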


The continuity of the solution to the $L_p$ Gaussian Minkowski problem is obtained for $\gamma_n(\cdot)\geq1/2$.
Theorem \ref{thm0-1} is rewritten as Theorem \ref{continuity-1}:

\begin{thm}\label{continuity-1}
Suppose $p\geq1$ and $K_i\in\mathcal{K}^n_o$ with $\gamma_n(K_i)\geq1/2$ for $i=0,1,2,\cdots$.
If the sequence $\{S_{p,\gamma_n}(K_i,\cdot)\}$ converges to $S_{p,\gamma_n}(K_0,\cdot)$ weakly, then the sequence $\{K_i\}$ converges to $K_0$ in the Hausdorff metric.
\end{thm}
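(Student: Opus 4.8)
The plan is to use the standard compactness-plus-uniqueness strategy for continuity results of this type. First I would invoke Lemma \ref{Conti-bound} to conclude that the sequence $\{K_i\}$ is bounded in $\mathbb{R}^n$. By the Blaschke selection theorem, every subsequence of $\{K_i\}$ has a further subsequence converging to some compact convex set $L$ in the Hausdorff metric. It then suffices to show that every such limit $L$ equals $K_0$; this will force the whole sequence $\{K_i\}$ to converge to $K_0$. So fix such a convergent subsequence, which I will relabel $\{K_i\}$, with $K_i \to L$.

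The next step is to show $L \in \mathcal{K}^n_o$. By continuity of Gaussian volume, $\gamma_n(L) = \lim_i \gamma_n(K_i) \geq 1/2 > 0$, so Lemma \ref{interior} gives that $L$ is a convex body, and then Lemma \ref{Conti-polar-bound} upgrades this to $L \in \mathcal{K}^n_o$ (the origin is an interior point, again using $\gamma_n(K_i)\geq 1/2$). Once $L \in \mathcal{K}^n_o$, Proposition \ref{weak-conve-p} applies: since $K_i \to L$ in the Hausdorff metric, the measures $S_{p,\gamma_n}(K_i,\cdot)$ converge weakly to $S_{p,\gamma_n}(L,\cdot)$. But by hypothesis $S_{p,\gamma_n}(K_i,\cdot)$ also converges weakly to $S_{p,\gamma_n}(K_0,\cdot)$. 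By uniqueness of weak limits of measures, $S_{p,\gamma_n}(L,\cdot) = S_{p,\gamma_n}(K_0,\cdot)$.

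Finally I would apply the uniqueness theorem for the $L_p$ Gaussian Minkowski problem, Lemma \ref{uniqueness}: since $p \geq 1$, both $L$ and $K_0$ lie in $\mathcal{K}^n_o$, and both satisfy $\gamma_n(\cdot) \geq 1/2$, the equality $S_{p,\gamma_n}(L,\cdot) = S_{p,\gamma_n}(K_0,\cdot)$ forces $L = K_0$. This completes the subsequence argument and hence the proof.

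The main obstacle — and the reason the hypothesis $\gamma_n(K_i) \geq 1/2$ is essential — is controlling the limit body $L$: a priori the Hausdorff limit of bodies in $\mathcal{K}^n_o$ could degenerate (lose dimension, or have the origin slip to its boundary), in which case neither Proposition \ref{weak-conve-p} nor Lemma \ref{uniqueness} would be available. The lower bound on Gaussian volume is precisely what rules out both degeneracies, via Lemmas \ref{interior} and \ref{Conti-polar-bound}; without it the result is false, as the scaling counterexample of Zhu for $p = n$ (and the remark after Lemma \ref{Conti-polar-bound}) shows. The boundedness step (Lemma \ref{Conti-bound}) is the other place where real work happens, but that has already been carried out; given the lemmas assembled above, the remaining argument is the routine compactness-and-uniqueness packaging described here.
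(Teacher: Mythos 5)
Your proposal is correct and follows essentially the same route as the paper's proof: boundedness via Lemma \ref{Conti-bound}, Blaschke selection, the $\gamma_n(\cdot)\geq 1/2$ hypothesis together with Lemmas \ref{interior} and \ref{Conti-polar-bound} to keep the limit body in $\mathcal{K}^n_o$, then Proposition \ref{weak-conve-p} and the uniqueness Lemma \ref{uniqueness} to identify the limit with $K_0$. The only difference is presentational: you phrase the final step as the subsequence principle, while the paper runs an explicit argument by contradiction with $\|h_{K_i}-h_{K_0}\|\geq\varepsilon_0$; these are equivalent.
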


\begin{proof}
Assume that the sequence $\{K_i\}$ does not converge to $K_0$.
Then, without loss of generality, we may assume that there exists a constant $\varepsilon_0>0$ such that
\begin{align*}
\|h_{K_i}-h_{K_0}\|\geq \varepsilon_0,
\end{align*}
for all $i=1,2,\cdots$.

By Lemma \ref{Conti-bound}, $\{K_i\}$ is bounded. Thus, from Blaschke selection theorem,
the sequence $\{K_i\}$ has a convergent subsequence $\{K_{i_j}\}$ which
converges to a compact convex set $L_0$.
Clearly, $L_0\neq K_0$.
Together with the continuity of $\gamma_n$ and $\gamma_n(K_{i_j})\geq1/2$ for $i=1,2,\cdots$,
we have
\begin{align*}
 \gamma_n(L_0)=\lim_{j\rightarrow+\infty}\gamma_n(K_{i_j})\geq1/2.
\end{align*}
By $\lim_{j\rightarrow+\infty}K_{i_j}=L_0$ and $\gamma_n(K_{i_j})\geq1/2$ for $i=1,2,\cdots$ again, $L_0\in\mathcal{K}^n_o$ with $L_0\neq K_0$ from Lemma \ref{Conti-polar-bound}.

Since $\{K_{i_j}\}$ converges to $L_0$ in Hausdorff metric, then  $\{S_{p,\gamma_n}(K_{i_j},\cdot)\}$ converges to $S_{p,\gamma_n}(L_0,\cdot)$ weakly by Lemma \ref{weak-conve-p}.
Together with $\{S_{p,\gamma_n}(K_{i_j},\cdot)\}$ converges to $S_{p,\gamma_n}(K_0,\cdot)$ weakly, we have
\begin{align*}
 S_{p,\gamma_n}(K_0,\cdot)=S_{p,\gamma_n}(L_0,\cdot).
\end{align*}
By $\gamma_n(K_0),\gamma_n(L_0)\geq1/2$ and Lemma \ref{uniqueness}, we obtain $K_0=L_0$.
This is a contradiction to $K_0\neq L_0$. Therefore, the sequence $\{K_i\}$ converges to $K_0$ in the Hausdorff metric.
\end{proof}

\vskip 0.5cm
\section{The proof of Theorem \ref{thm0-2}}\label{}
\vskip 0.3cm

In this section, we mainly prove Theorem \ref{thm0-2}. The following lemma will be needed.

\begin{lem}\label{bound-2}
Suppose $p_i\geq1$ and $K_i\in\mathcal{K}^n_o$ with $\gamma_n(K_i)\geq1/2$ for $i=0,1,2,\cdots$.
If $S_{p_i,\gamma_n}(K_i,\cdot)=S_{p_0,\gamma_n}(K_0,\cdot)$  with $\lim_{i\rightarrow+\infty}p_i=p_0$, then the sequence $\{K_i\}$ is bounded.
\end{lem}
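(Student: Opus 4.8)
The plan is to follow the proof of Lemma~\ref{Conti-bound}, carrying the varying exponents $p_i$ through the argument and exploiting the stronger hypothesis that all the measures $S_{p_i,\gamma_n}(K_i,\cdot)$ coincide with one fixed finite measure $\mu_0:=S_{p_0,\gamma_n}(K_0,\cdot)$. Recall the functional $\Phi_p$ from \eqref{Phi}. First I would establish a lower bound for $\Phi_{p_i}(K_i)$ uniform in $i$: applying Lemma~\ref{Conti-Min-ineq} with $K=K_i$, $L=B_n$, $p=p_i$, using $h_{B_n}\equiv 1$, dividing by $\gamma_n(K_i)>0$ and rearranging yields
$$\Phi_{p_i}(K_i)\ \geq\ \log\gamma_n(B_n)-\frac{|S_{p_i,\gamma_n}(K_i,\cdot)|}{p_i\,\gamma_n(K_i)}\ \geq\ \log\gamma_n(B_n)-2|\mu_0|,$$
where the last inequality uses $p_i\geq 1$, $\gamma_n(K_i)\geq 1/2$ and $|S_{p_i,\gamma_n}(K_i,\cdot)|=|\mu_0|<\infty$; the right-hand side is a constant independent of $i$.

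The key preliminary step is an analogue of Lemma~\ref{Conti-cos-bound} for a fixed measure and a varying exponent: there is a constant $c_1>0$ with $\int_{S^{n-1}}(u\cdot v)_+^{p_i}\,d\mu_0(v)\geq c_1$ for all $u\in S^{n-1}$ and all $i$. Set $g_i(u)=\int_{S^{n-1}}(u\cdot v)_+^{p_i}\,d\mu_0(v)$. Since $0\leq (u\cdot v)_+^{p_i}\leq 1$ and $(u\cdot v)_+^{p_i}\to (u\cdot v)_+^{p_0}$ for every $v\in S^{n-1}$, dominated convergence gives $g_i\to g_0$ pointwise on $S^{n-1}$; moreover, since $|a^{p}-b^{p}|\leq P|a-b|$ whenever $a,b\in[0,1]$ and $1\leq p\leq P:=\sup_j p_j<\infty$, the $g_i$ are uniformly Lipschitz on $S^{n-1}$, so $g_i\to g_0$ uniformly by the Arzel\`{a}--Ascoli theorem. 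As $\mu_0=S_{p_0,\gamma_n}(K_0,\cdot)$ is not concentrated on any closed hemisphere of $S^{n-1}$, $g_0>0$ on the compact set $S^{n-1}$, hence $g_0\geq c_2>0$, and uniform convergence then gives $g_i\geq c_1>0$ for all $i$. (The same conclusion also follows from a direct subsequence-compactness argument using that $\mu_0$ is not concentrated on a closed hemisphere.)

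With $c_1$ available, I would conclude as in Lemma~\ref{Conti-bound}: let $R_i=\rho_{K_i}(u_i)=\max_{u\in S^{n-1}}\rho_{K_i}(u)$, so that $K_i\subseteq R_iB_n$ and $h_{K_i}(v)\geq R_i(u_i\cdot v)_+$. Using $h_{K_i}^{p_i}(v)\geq R_i^{p_i}(u_i\cdot v)_+^{p_i}$, the monotonicity $\gamma_n(K_i)\leq\gamma_n(R_iB_n)$ and the previous step,
$$\Phi_{p_i}(K_i)\ \leq\ -\frac{c_1\,R_i^{p_i}}{p_i\,\gamma_n(R_iB_n)}+\log\gamma_n(R_iB_n).$$
If $\{R_i\}$ were unbounded, pass to a subsequence with $R_i\to+\infty$; then $\gamma_n(R_iB_n)\to 1$ by \eqref{gamma=1}, while $\{p_i\}$ is bounded (it converges to $p_0$) and $R_i^{p_i}\geq R_i\to+\infty$, so the right-hand side tends to $-\infty$, contradicting the lower bound of the first step. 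Hence $\{R_i\}$, and therefore $\{K_i\}$, is bounded. The main obstacle is the second step: in the fixed-$p$ situation of Lemma~\ref{Conti-cos-bound} uniform convergence comes for free because each $g_i$ is a support function, whereas here $g_i$ is positively homogeneous of degree $p_i$ (in general $\neq 1$), so uniform convergence must be obtained directly through equicontinuity (or a compactness argument) while keeping the exponents $p_i$ controlled.
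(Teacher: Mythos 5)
Your proposal is correct, and its overall skeleton is exactly the paper's: the uniform lower bound $\Phi_{p_i}(K_i)\geq \log\gamma_n(B_n)-2|\mu_0|$ from Lemma~\ref{Conti-Min-ineq} with $L=B_n$, the upper bound in terms of $R_i=\max_{u}\rho_{K_i}(u)$, and the contradiction via \eqref{gamma=1} when $R_i\to+\infty$. The one place where you genuinely diverge is the uniform positivity of $\int_{S^{n-1}}(u\cdot v)_+^{p_i}\,d\mu_0(v)$. The paper sidesteps any statement about varying exponents: since $p_i\to p_0$ it assumes without loss of generality $1\leq p_i<2p_0$, and then uses $(u\cdot v)_+\leq 1$ to bound $(u\cdot v)_+^{p_i}\geq (u\cdot v)_+^{2p_0}$ together with $\tfrac1{p_i}\geq\tfrac1{2p_0}$, reducing everything to a single fixed-exponent integral $\int(u\cdot v)_+^{2p_0}d\mu_0(v)\geq c_3>0$, whose positivity follows as in Lemma~\ref{Conti-cos-bound} from non-concentration of $\mu_0$ on a closed hemisphere plus compactness. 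You instead prove a varying-exponent analogue of Lemma~\ref{Conti-cos-bound} directly: pointwise convergence of $g_i$ by dominated convergence, a uniform Lipschitz bound $|a^{p}-b^{p}|\leq P|a-b|$ on $[0,1]$ with $P=\sup_j p_j<\infty$, and equicontinuity on the compact sphere to upgrade to uniform convergence, hence $g_i\geq c_1>0$. Both are valid; the paper's exponent-comparison trick is shorter and avoids any equicontinuity argument, while your route is slightly more robust (it needs only boundedness of $\{p_i\}$, not a prearranged upper bound like $2p_0$, and it isolates a reusable uniform-positivity statement for families of exponents). Your concluding estimate $\Phi_{p_i}(K_i)\leq -c_1R_i^{p_i}/(p_i\gamma_n(R_iB_n))+\log\gamma_n(R_iB_n)$ with $R_i^{p_i}\geq R_i$ and $p_i\leq P$ then matches the paper's with $p_i$ replaced by $2p_0$, so the contradiction goes through identically.
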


\begin{proof}
By $p_i\geq1$ and $\lim_{i\rightarrow+\infty}p_i=p_0$, without loss of generality, we may assume that
\begin{align}\label{pi}
1\leq p_i<2p_0,
\end{align}
for all $i$.
From \eqref{Phi},
 $S_{p_i,\gamma_n}(K_i,\cdot)=S_{p_0,\gamma_n}(K_0,\cdot)$, Lemma \ref{Conti-Min-ineq} and $\gamma_n(K_i)\geq1/2$, we have
\begin{align}\label{Conti-bound-2}
\Phi_{p_i}(K_i)
&=-\frac{1}{p_i\gamma_n(K_i)}\int_{S^{n-1}}
h^{p_i}_{K_i}(u)dS_{p_i,\gamma_n}(K_i,u)+\log\gamma_n(K_i)\nonumber\\
&\geq-\frac{1}{p_i\gamma_n(K_i)}\int_{S^{n-1}}h^{p_i}_{B_n}(u)dS_{p_i,\gamma_n}(K_i,u)+\log\gamma_n(B_n)\nonumber\\
&=-\frac{|S_{p_i,\gamma_n}(K_i,\cdot)|}{p_i\gamma_n(K_i)}+\log\gamma_n(B_n)\nonumber\\
&\geq-\frac{2}{p_i}|S_{p_0,\gamma_n}(K_0,\cdot)|+\log\gamma_n(B_n)\nonumber\\
&\geq-2|S_{p_0,\gamma_n}(K_0,\cdot)|+\log\gamma_n(B_n)
\end{align}

Let
\begin{align*}
R_i=\rho_{K_i}(u_i)=\max\{\rho_{K_i}(u): u\in S^{n-1}\},
\end{align*}
where $u_i\in S^{n-1}$.
Then, $R_iu_i\in K_i$ and $K_i\subseteq R_iB_n$.
Thus,
\begin{align*}
    h_{K_i}(v)\geq R_i(u_i\cdot v)_+,
\end{align*}
for all $v\in S^{n-1}$ and $\gamma_n(K_i)\leq \gamma_n(R_iB_n)$.

Since $S_{p_0,\gamma_n}(K_0,\cdot)$ is not concentrated in any closed hemisphere of $S^{n-1}$ and  $S^{n-1}$ is a compact set, then there exists a constant $c_3>0$ such that
\begin{align*}
    \int_{S^{n-1}}(u\cdot v)_+^{2p_0}dS_{p_0,\gamma_n}(K_0,v)\geq c_3,
\end{align*}
for $u\in S^{n-1}$.
Thus,
\begin{align*}
\Phi_{p_i}(K_i)
&=-\frac{1}{p_i\gamma_n(K_i)}\int_{S^{n-1}}
h^{p_i}_{K_i}(v)dS_{p_i,\gamma_n}(K_i,v)+\log\gamma_n(K_i)\\
&=-\frac{1}{p_i\gamma_n(K_i)}\int_{S^{n-1}}
h^{p_i}_{K_i}(v)dS_{p_0,\gamma_n}(K_0,v)+\log\gamma_n(K_i)\\
&\leq-\frac{R_i^{p_i}}{p_i\gamma_n(K_i)}\int_{S^{n-1}}(u_i\cdot v)_+^{p_i}dS_{p_0,\gamma_n}(K_0,v)+\log\gamma_n(K_i)\\
&\leq-\frac{R_i^{p_i}}{2p_0\gamma_n(R_iB_n)}\int_{S^{n-1}}(u_i\cdot v)_+^{2p_0}dS_{p_0,\gamma_n}(K_0,v)+\log\gamma_n(R_iB_n)\\
&\leq-\frac{c_3R_i^{p_i}}{2p_0\gamma_n(R_iB_n)}+\log\gamma_n(R_iB_n).
\end{align*}

Assume that $\{R_i\}$ is not bounded. Without loss of generality,
we may assume that $\lim_{i\rightarrow+\infty}R_i=+\infty$.
Together with \eqref{gamma=1} and \eqref{pi},
we have
\begin{align*}
\Phi_{p_i}(K_i)\leq-\frac{c_3R_i^{p_i}}{2p_0\gamma_n(R_iB_n)}+\log\gamma_n(R_iB_n)
\rightarrow -\infty,
\end{align*}
as $i\rightarrow+\infty$.
This is a contradiction to \eqref{Conti-bound-2}.
Hence, $\{R_i\}$ is bounded, that is, the sequence $\{K_i\}$ is bounded.
\end{proof}

Theorem \ref{thm0-2} is rewritten as the following Theorem \ref{continuity-2}.

\begin{thm}\label{continuity-2}
Suppose $p_i\geq1$ and $K_i\in\mathcal{K}^n_o$ with $\gamma_n(K_i)\geq1/2$ for $i=0,1,2,\cdots$.
If $S_{p_i,\gamma_n}(K_i,\cdot)=S_{p_0,\gamma_n}(K_0,\cdot)$, then the sequence $\{K_i\}$ converges to $K_0$ in the Hausdorff metric as $\{p_i\}$ converges to $p_0$.
\end{thm}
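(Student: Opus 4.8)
The plan is to mirror the proof of Theorem \ref{continuity-1}, replacing the boundedness input (Lemma \ref{Conti-bound}) with Lemma \ref{bound-2} and the weak-convergence input with a continuity-in-$p$ argument. First I would set up a proof by contradiction: assume $\{K_i\}$ does not converge to $K_0$, so after passing to a subsequence there is $\varepsilon_0>0$ with $\|h_{K_i}-h_{K_0}\|\ge\varepsilon_0$ for all $i$. Since $p_i\to p_0$, Lemma \ref{bound-2} guarantees $\{K_i\}$ is bounded, so by the Blaschke selection theorem a further subsequence $\{K_{i_j}\}$ converges in the Hausdorff metric to a compact convex set $L_0\neq K_0$. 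Exactly as in the proof of Theorem \ref{continuity-1}, continuity of $\gamma_n$ gives $\gamma_n(L_0)=\lim_j\gamma_n(K_{i_j})\ge 1/2$, and then Lemma \ref{Conti-polar-bound} upgrades this to $L_0\in\mathcal{K}^n_o$.

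The key new step is to identify the limiting measure. We know $S_{p_{i_j},\gamma_n}(K_{i_j},\cdot)=S_{p_0,\gamma_n}(K_0,\cdot)$ is in fact independent of $j$, so the left-hand side trivially ``converges'' to $S_{p_0,\gamma_n}(K_0,\cdot)$; the content is to show it also converges weakly to $S_{p_0,\gamma_n}(L_0,\cdot)$. Using the integral representation in Lemma \ref{definition-Lp-Gaussian}, write $dS_{p_{i_j},\gamma_n}(K_{i_j},u)=h_{K_{i_j}}^{1-p_{i_j}}(u)\,dS_{\gamma_n,K_{i_j}}(u)$. Since $K_{i_j}\to L_0$ in $\mathcal{K}^n_o$, Lemma \ref{weak-conve} gives $S_{\gamma_n,K_{i_j}}\to S_{\gamma_n,L_0}$ weakly, and $h_{K_{i_j}}\to h_{L_0}$ uniformly on $S^{n-1}$ with $h_{L_0}$ bounded below by a positive constant (because $L_0\in\mathcal{K}^n_o$). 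Combined with $p_{i_j}\to p_0$, the functions $u\mapsto h_{K_{i_j}}^{1-p_{i_j}}(u)$ converge uniformly on $S^{n-1}$ to $u\mapsto h_{L_0}^{1-p_0}(u)$. Therefore, for any $g\in C(S^{n-1})$,
\begin{align*}
\int_{S^{n-1}}g\,dS_{p_{i_j},\gamma_n}(K_{i_j},\cdot)
=\int_{S^{n-1}}g\,h_{K_{i_j}}^{1-p_{i_j}}\,dS_{\gamma_n,K_{i_j}}
\longrightarrow\int_{S^{n-1}}g\,h_{L_0}^{1-p_0}\,dS_{\gamma_n,L_0}
=\int_{S^{n-1}}g\,dS_{p_0,\gamma_n}(L_0,\cdot),
\end{align*}
where the middle convergence follows because a uniformly convergent sequence of continuous integrands against weakly convergent finite measures converges (one estimates the difference into two pieces: $\|g\|_\infty$ times the sup-norm of the difference of the integrands times the total masses, which stay bounded, plus $g\,h_{L_0}^{1-p_0}$ integrated against the weakly convergent measures). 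Hence $S_{p_0,\gamma_n}(K_0,\cdot)=S_{p_0,\gamma_n}(L_0,\cdot)$.

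Finally, since $\gamma_n(K_0),\gamma_n(L_0)\ge 1/2$ and $p_0\ge 1$, the uniqueness statement Lemma \ref{uniqueness} forces $K_0=L_0$, contradicting $K_0\neq L_0$. This contradiction shows every subsequence of $\{K_i\}$ has a further subsequence converging to $K_0$, hence $\{K_i\}\to K_0$ in the Hausdorff metric. I expect the main obstacle to be the justification of the weak-convergence step for the $p$-varying measures: one must be careful that $p_i$ moves, so the standard ``continuity of $S_{p,\gamma_n}$ in the body'' (Proposition \ref{weak-conve-p}, which is stated for fixed $p$) does not directly apply, and one needs the uniform lower bound on $h_{L_0}$ — which is exactly why passing through Lemma \ref{Conti-polar-bound} to get $L_0\in\mathcal{K}^n_o$ before taking limits of measures is essential. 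The boundedness from Lemma \ref{bound-2} already uses $p_i\to p_0$ (via the bound $1\le p_i<2p_0$), so that hypothesis is genuinely needed and cannot be dropped.
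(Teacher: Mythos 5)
Your proposal is correct and follows essentially the same route as the paper: contradiction, boundedness via Lemma \ref{bound-2}, Blaschke selection plus Lemma \ref{Conti-polar-bound} to get $L_0\in\mathcal{K}^n_o$ with $L_0\neq K_0$, identification of the limit measure as $S_{p_0,\gamma_n}(L_0,\cdot)$, and then Lemma \ref{uniqueness}. In fact your explicit treatment of the $p$-varying weak-convergence step via the representation $dS_{p,\gamma_n}(K,\cdot)=h_K^{1-p}\,dS_{\gamma_n,K}$ and the uniform convergence of $h_{K_{i_j}}^{1-p_{i_j}}$ supplies detail that the paper only sketches.
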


\begin{proof}
Suppose that $\{K_i\}$ does not converge to $K_0$ in the Hausdorff metric.
Then, there exist a constant $\varepsilon_1>0$ and a subsequence of $\{K_i\}$, denoted by $\{K_i\}$ again, such that
\begin{align*}
\|h_{K_i}-h_{K_0}\|\geq \varepsilon_1,
\end{align*}
for all $i=1,2,\cdots$.

By Lemma \ref{bound-2}, $\{K_i\}$ is bounded. By the Blaschke selection theorem,
we have $\{K_i\}$ has a convergent subsequence $\{K_{i_j}\}$ which
converges to a compact convex set $L_0$.
Clearly, $L_0\neq K_0$.
Together with the continuity of $\gamma_n$ and $\gamma_n(K_{i_j})\geq1/2$ for $i=1,2,\cdots$,
we have
\begin{align*}
 \gamma_n(L_0)=\lim_{j\rightarrow+\infty}\gamma_n(K_{i_j})\geq1/2.
\end{align*}
By $\lim_{j\rightarrow+\infty}K_{i_j}=L_0$ and $\gamma_n(K_{i_j})\geq1/2$ for $i=1,2,\cdots$ again, $L_0\in\mathcal{K}^n_o$ with $L_0\neq K_0$ from Lemma \ref{Conti-polar-bound}.

Since $\{K_{i_j}\}$ converges to $L_0$ in Hausdorff metric, then
$h_{K_{i_j}}\rightarrow h_{L_0}$ uniformly and $S_{\gamma_n,K_{i_j}}\rightarrow S_{\gamma_n,K_{0}}$ weakly as $j\rightarrow+\infty$.
Together with $\lim_{i\rightarrow+\infty}p_i=p_0$, we have $\{S_{p_{i_j},\gamma_n}(K_{i_j},\cdot)\}$ converges to $S_{p_0,\gamma_n}(L_0,\cdot)$ weakly.
By $S_{p_i,\gamma_n}(K_i,\cdot)=S_{p_0,\gamma_n}(K_0,\cdot)$,
\begin{align*}
 S_{p,\gamma_n}(K_0,\cdot)=S_{p,\gamma_n}(L_0,\cdot).
\end{align*}
By $\gamma_n(K_0),\gamma_n(L_0)\geq1/2$ and Lemma \ref{uniqueness}, we obtain $K_0=L_0$.
This is a contradiction to $K_0\neq L_0$. Therefore, the sequence $\{K_i\}$ converges to $K_0$ in the Hausdorff metric.
\end{proof}

\begin{remark}
Since Lemma \ref{Conti-polar-bound} plays a vital role in the proofs of Theorem \ref{thm0-1} and Theorem \ref{thm0-2}, then the condition ``$\gamma_n(K_i)\geq \frac{1}{2}$ for $K_i\in \mathcal{K}^n_o$" is necessary for Theorem \ref{thm0-1} and Theorem \ref{thm0-2} by Remark \ref{rem-1}.
\end{remark}

\vskip 0.3 cm


\end{document}